\documentclass[a4paper,12pt]{amsart}
\usepackage{}
\usepackage{mathrsfs}

\usepackage{amssymb}
\usepackage{latexsym}
\usepackage{amsfonts}
\usepackage{amsmath}
\usepackage{eucal}
\usepackage{bm}
\usepackage{bbm}
\usepackage{graphicx}
\usepackage[english]{varioref}
\usepackage[nice]{nicefrac}
\usepackage[all]{xy}
\usepackage{amsthm}

\newcommand{\Id}{\textrm{id}}

\def\i{^{-1}}
\def\ge{\geqslant}
\def\le{\leqslant}
\def\<{\langle}
\def\>{\rangle}
\def\lto{\hookrightarrow}
\def\||{\parallel}
\def\d{\text{d}}
\def\Dr{\text{D}}
\def\gr{\text{grad}}

\def\sup{\text{sup}}

\def\ud{\underline}

\def\ov{\overline}
\def\u{\underline}

\def\g{\gamma}
\def\G{\Gamma}
\def\d{\delta}

\def\e{\epsilon}

\def\s{\sigma}
\def\t{\tau}
\def\th{\theta}

\def\ZZ{\mathbb Z}
\def\NN{\mathbb N}

\def\RR{\mathbb R}
\def\CC{\mathbb C}

\def\co{\mathcal O}
\def\cp{\mathcal P}

\def\tW{\tilde W}
\def\tw{\tilde w}
\def\tB{\tilde B}

\def\fH{\mathfrak H}

\theoremstyle{plain}
\newtheorem{thm}{Theorem}[section]
\newtheorem*{thm*}{Theorem}
 \newtheorem{prop}[thm]{Proposition}
 \newtheorem{lem}[thm]{Lemma}
 \newtheorem{cor}[thm]{Corollary}

\theoremstyle{definition}

\theoremstyle{remark}

\newtheorem*{rmk}{Remark}
\newtheorem*{claim*}{Claim}

\begin{document}
\author{Xuhua He}
\address{Department of Mathematics, The Hong Kong University of Science and Technology, Clear Water Bay, Kowloon, Hong Kong}
\email{maxhhe@ust.hk}
%\thanks{The author is partially supported by HKRGC grants 601409 and 602011.}
\author{Sian Nie}
\address{Institute of Mathematics, Chinese Academy of Sciences, Beijing, 100190, China}
\email{niesian@gmail.com}
\title[]{Minimal length elements of finite Coxeter groups}
\keywords{Minimal length element, good element, finite Coxeter group}
\subjclass[2000]{20F55, 20E45}

\begin{abstract}
We give a geometric proof that minimal length elements in a (twisted) conjugacy class of a finite Coxeter group $W$ have remarkable properties with respect to conjugation, taking powers in the associated Braid group and taking centralizer in $W$. 
\end{abstract}

\maketitle

\section*{Introduction}
Let $W$ be a finite Coxeter group. Let $\co$ be a conjugacy class of $W$ and $\co_{\min}$ be the set of elements of minimal length in $\co$. In \cite{GP}, Geck and Pfeiffer showed that the elements in $\co_{\min}$ have remarkable properties with respect to conjugation in $W$ and in the associated Hecke algebra $H$. In \cite{G}, Geck and Michel showed that there exists some element in $\co_{\min}$ that has remarkable properties when taking powers in the associated Braid group. These properties were later generalized to twisted conjugacy classes. See \cite{GKP} and \cite{He1}. In a recent work \cite{L2}, Lusztig showed that the centralizer of a minimal length element in $W$ also has remarkable properties. 

These remarkable properties lead to the definition of determination of ``character tables'' of Hecke algebra $H$. They also play an important role in the study of unipotent representation \cite{L} and in the study of geometric and cohomological properties of Deligne-Lusztig varieties \cite{OR}, \cite{H2}, \cite{BR}, \cite{L2} and \cite{HL}. 

The proofs of these properties were of a case-by-case nature and replied on computer calculation for exceptional types. 

In this paper, we'll give a case-free proof of these properties on $\co_{\min}$ based on a geometric interpretation of conjugacy classes and length function on $W$. Similar ideas will also be applied to affine Weyl groups in a future work. 

In \cite{R}, Rapoport pointed out to us that this paper together with \cite{OR}, \cite{BR} (see also \cite{HL} for a stronger result) gives a computer-free proof of the vanishing theorem \cite[2.1]{OR} on the cohomology of Deligne-Lusztig varieties. This simplifies several steps in Lusztig' classification of representation of finite groups of Lie type \cite{L0}. More precisely, the proof of \cite[Proposition 6.10]{L0} applies without the assumption of $q \ge h$ and many arguments in \cite[Chapter 7-9]{L0} can then be bypassed. 

\section{Geometric interpretation of twisted conjugacy class}

\subsection{} Let $W$ be a finite Coxeter group with generators $s_i$ for $i \in S$ and corresponding Coxeter matrix $M=(m_{i j})_{i, j \in S}$. The elements $s_i$ for $i \in S$ are called {\it simple reflections}. Let $\d: W \to W$ be a group automorphism sending simple reflections to simple reflections. We still denote by $\d$ the induced bijection on $S$. Then $\d(s_i)=s_{\d(i)}$ for all $i \in S$. Set $\tW=\<\d\> \ltimes W$. For any $i \in \ZZ$ and $w \in W$, we set $\ell(\d^i w)=\ell(w)$, where $\ell(w)$ is the length of $w$ in the Coxeter group $W$.

For any subset $J$ of $S$, we denote by $W_J$ the standard parabolic subgroup of $W$ generated by $(s_i)_{i \in J}$ and by $w_J$ the maximal element in $W_J$. We denote by $\tW^J$ (resp. ${}^J \tW$) the set of minimal coset representatives in $\tW/W_J$ (resp. $W_J \backslash \tW$). We simply write ${}^J \tW^J$ for ${}^J \tW \cap \tW^J$. For $\tw \in {}^J \tW^J$, we write $\tw(J)=J$ if the conjugation of $\tw$ sends simple reflections in $J$ to simple reflections in $J$. 

\subsection{}\label{equiv} Two elements $w, w'$ of $W$ are said to be $\d$-twisted conjugate if $w'=\d(x) w x \i$ for some $x \in W$. The relation of $\d$-twisted conjugacy is an equivalence relation and the equivalence classes are said to be $\d$-twisted conjugacy classes.

For $w, w' \in W$ and $i \in S$, we write $w \xrightarrow{s_i}_\d w'$ if $w'=s_{\d(i)} w s_i$ and $\ell(w') \le \ell(w)$.  We write $w \to_\d w'$ if there is a sequence $w=w_0, w_1, \cdots, w_n=w'$ of elements in $W$ such that for any $k$, $w_{k-1} \xrightarrow{s_i}_\d w_k$ for some $i \in S$.

We write $w \approx_\d w'$ if $w \to_\d w'$ and $w' \to_\d w$. It is easy to see that $w \approx_\d w'$ if $w \to_\d w'$ and $\ell(w)=\ell(w')$.

We call $w, w' \in W$ {\it elementarily strongly $\d$-conjugate} if $\ell(w)=\ell(w')$ and there exists $x \in W$ such that $w'=\d(x) w x \i$ and $\ell(\d(x) w)=\ell(x)+\ell(w)$ or $\ell(w x \i)=\ell(x)+\ell(w)$. We call $w, w'$ {\it strongly $\d$-conjugate} if there is a sequence $w=w_0, w_1, \cdots, w_n=w'$ such that for each $i$, $w_{i-1}$ is elementarily strongly $\d$-conjugate to $w_i$. We write $w \sim_\d w'$ if $w$ and $w'$ are strongly $\d$-conjugate.

Now we translate the notations $\to_\d, \sim_\d, \approx_\d$ in $W$ to some notations in $\tW$.

By \cite[Remark 2.1]{GKP}, the map $w \mapsto \d w$ gives a bijection between the $\d$-twisted conjugacy classes of $W$ and the ordinary conjugacy classes of $\tW$ that is contained in $W \d$.

For $\tw, \tw' \in \tW$ and $i \in S$, we write $\tw \xrightarrow{s_i} \tw'$ if $\tw'=s_i \tw s_i$ and $\ell(\tw') \le \ell(\tw)$.  We write $\tw \to \tw'$ if there is a sequence $\tw=\tw_0, \tw_1, \cdots, \tw_n=\tw'$ of elements in $\tW$ such that for any $k$, $\tw_{k-1} \xrightarrow{s_i}\tw_k$ for some $i \in S$. The notations $\approx$ and $\sim$ on $\tW$ are also defined in a similar way as $\approx_\d$, $\sim_\d$ on $W$. Then it is easy to see that for any $w, w' \in W$, $w \to_\d w'$ iff $\d w \to \d w'$, $w \approx_\d w'$ iff $\d w\approx \d w'$ and $w \sim_\d w'$ iff $\d w \sim \d w'$. 

\subsection{} Let $V$ be a real vector space with inner product $(, )$ such that there is an injection $\tW \lto GL(V)$ preserving $(, )$ and for any $i \in S$, $s_i$ acts on $V$ as a reflection. By \cite[Ch.V]{Bour}, such $V$ always exists. Unless otherwise stated, we regard $\tW$ as a reflection subgroup of $GL(V)$. We denote by $|| \cdot ||$ the norm on $V$ defined by $||v||=\sqrt{(v, v)}$ for $v \in V$. 

For any subspace $U$ of $V$, we denote by $U^\bot$ its orthogonal complement. 

For any hyperplane $H$, let $s_H \in GL(V)$ be the reflection along $H$. Let $\fH$ be the set of hyperplanes $H$ of $V$ such that the reflection $s_H$ is in $W$. Let $V^W$ be the set of fixed points by the action of $W$. Since $W$ is generated by $s_H$ for $H \in \fH$, $V^W=\cap_{H \in \fH} H$. 

Even if we start with $V$ with no nonzero fixed points, some pair $(W', V')$ with $(V')^{W'} \neq \{0\}$ appears in the inductive argument in this paper. This is the reason that we consider some vector space other than the one introduced in \cite[Ch.V]{Bour}. 

A connected component $A$ of $V-\cup_{H\in\fH}H$ is called a {\it Weyl chamber}. We denote its closure by $\bar A$. Let $H\in\fH$, if the set of inner points $H_A=(H\cap\bar{A})^\circ \subset H\cap\bar{A}$ spans $H$, then we call $H$ a {\it wall} of $A$ and $H_A$ a {\it face} of $A$.

The Coxeter group $W$ acts simply transitively on the set of Weyl chambers. The chamber containing $C=\{x \in E; (x, e_i)>0 \text{ for all } i \in S\}$ is called the {\it fundamental chamber} which is also denoted by $C$. For any Weyl chamber $A$, we denote by $x_A$ the unique element in $W$ such that $x_A(C)=A$. 

Let $K\subset V$ be a convex subset. A point $x \in K$ is called {\it a regular point} of $K$ if for each $H\in\fH$, $K\subset H$ whenever $x\in H$. The set of regular points of $K$ is open dense in $K$.

\subsection{} Given any element $\tw \in \tW$ and a Weyl chamber $A$, we define $\tw_A=x_A \i \tw x_A$. Then the map $A \mapsto \tw_A$ gives a bijection from the set of Weyl chambers to the conjugacy class of $\tw$ in $\tW$.

For any two chambers $A, A'$, denote by $\fH(A,A')$ the set of hyperplanes in $\fH$ separating $A$ and $A'$. Then $\ell(\tw)=\sharp \fH(C,\tw(C))$ for any $\tw\in \tW$. In general for any Weyl chamber $A$, $$\ell(\tw_A)=\sharp \fH(A, \tw(A)).$$

Let $A, A'$ be Weyl Chambers with a common face $H_A=H_{A'}$, here $H \in \fH$. Then $x_A \i s_H x_A=s_i$ for some $i \in S$. Now $$\tw_{A'}=(s_H x_A) \i \tw (s_H x_A)=s_i x_A \i \tw x_A s_i=s_i \tw_A s_i$$ is obtained from $\tw_A$ by conjugation a simple reflection $s_i$. We may check if $\ell(\tw_{A'})>\ell(\tw_A)$ by the following criterion.

\begin{lem}\label{cr}
We keep the notations as above. Define a map $f_{\tw}: V \to \RR$ by $v \mapsto ||\tw(v)-v||^2$.  Let $h \in H_A$ and $v\in H^\bot$ with $x-\e v\in A$ for sufficiently small $\e>0$. Set $\Dr_vf(h)=\lim_{t\rightarrow0}\frac{f_{\tw}(h+tv)-f_{\tw}(h)}{t}$. If $\ell(\tw_{A'})=\ell(\tw_A)+2$, then $\Dr_vf(h)>0$.
\end{lem}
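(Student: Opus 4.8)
The plan is to reduce the statement to a pair of inner-product inequalities and then read those inequalities off from the length hypothesis via the chamber description of the length function.

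First I would compute the directional derivative directly. Since $\tw$ acts orthogonally on $V$ and every hyperplane in $\fH$ passes through the origin (so that $V^W=\cap_{H\in\fH}H$), I expand $f_{\tw}(h+tv)=\|\tw(h+tv)-(h+tv)\|^2$ as a quadratic in $t$; orthogonality of $\tw$ collapses the cross term and gives $\Dr_v f(h)=2(\tw(h)-h,\tw(v)-v)$. Writing $v=-\l\a$ with $\l>0$, where $\a$ is the normal of $H$ chosen to point into $A$ (the condition $h-\e v\in A$ fixes this sign), and using $(h,\a)=0$ together with $(\tw(h),\tw(v))=(h,v)$, I can rewrite this as
\[
\Dr_v f(h)=2\l\bigl[(\tw(h),\a)+(\tw\i(h),\a)\bigr].
\]
So it suffices to show that both summands are nonnegative and that at least one is strictly positive.

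Next I would translate the hypothesis $\ell(\tw_{A'})=\ell(\tw_A)+2$ into geometry using $\ell(\tw_A)=\sharp\fH(A,\tw(A))$. Recording for each hyperplane which side of it contains a given chamber, and noting that $A'$ differs from $A$ only across $H$ while $\tw(A')$ differs from $\tw(A)$ only across $\tw(H)$, a short sign bookkeeping shows that $\sharp\fH(A',\tw(A'))-\sharp\fH(A,\tw(A))$ splits into two contributions, one localized at $H$ and one at $\tw(H)$, each a flip of membership worth $\pm1$ (and the two coincide, forcing net $0$, when $\tw(H)=H$). Hence the value $+2$ forces three facts: $\tw(H)\neq H$; the chambers $A$ and $\tw(A)$ lie on the same side of $H$; and $A$ and $\tw(A)$ lie on the same side of $\tw(H)$.

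Finally I would combine the two. Since $\a$ points into $A$, the same-side-of-$H$ statement gives $(x,\a)\ge0$ for $x\in\overline{\tw(A)}$, and as $\tw(h)\in\overline{\tw(A)}$ this yields $(\tw(h),\a)\ge0$; likewise $\tw(\a)$ points into $\tw(A)$, so the same-side-of-$\tw(H)$ statement gives $(x,\tw(\a))\ge0$ for $x\in\bar A$, whence $(\tw\i(h),\a)=(h,\tw(\a))\ge0$ because $h\in\bar A$. The only delicate step is strictness: I claim every $h\in H_A$ is regular, lying on no hyperplane of $\fH$ other than $H$. Indeed, if $h\in H'$ for some $H'\in\fH$, then the linear functional $(\cdot,\text{normal of }H')$ is nonnegative on a full neighborhood of $h$ in $H$ (that neighborhood lies in $\bar A$) and vanishes at the interior point $h$, hence vanishes on all of $H$, forcing $H'=H$. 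Since $\tw(H)\neq H$, the point $\tw(h)$ lies on $\tw(H)$ but not on $H$, so $(\tw(h),\a)\neq0$ and therefore $(\tw(h),\a)>0$; the displayed formula then gives $\Dr_v f(h)>0$. I expect the sign bookkeeping of the middle step to be the part that needs the most care to state cleanly, while the regularity argument for strictness is the conceptual crux.
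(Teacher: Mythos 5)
Your proof is correct and follows essentially the same route as the paper: both compute $\Dr_vf(h)=2(\tw(h)-h,\tw(v)-v)$, deduce from the hypothesis that $\fH(A',\tw(A'))=\fH(A,\tw(A))\sqcup\{H,\tw(H)\}$ with $H\neq\tw(H)$ (so that neither $H$ nor $\tw(H)$ separates $A$ from $\tw(A)$), and conclude by showing the two resulting inner products have the required sign. The only cosmetic difference is that the paper extracts the strict inequalities from the position of the segment joining $h$ to $\tw(h)$ relative to $H$ and $\tw(H)$, whereas you argue directly on $\bar A$ and $\overline{\tw(A)}$ and obtain strictness from the (correct) observation that a point of a face lies on exactly one hyperplane of $\fH$.
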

\begin{proof}
It is easy to see that $\fH(A',\tw A')-\fH(A,\tw A)\subset\{H,\tw H\}$. By our assumption $\sharp\fH(A',\tw A')=\sharp\fH(A,\tw A)+2$. Hence $$\fH(A',\tw A')=\fH(A,\tw A)\sqcup\{H,\tw H\}$$ and $H\neq\tw H$. In particular, $H_A \cap \tw H_A=\emptyset$ and $h\neq\tw(h)$.

Let $L(h,\tw(h))$ be the affine line spanned by $h$ and $\tw(h)$. Then $L(h,\tw(h))-H\cup\tw(H)$ consists of three connected components: $L_{-}=\{h+t(\tw(h)-h); t<0\}$, $L_{0}=\{h+t(\tw(h)-h); 0<t<1\}$ and $L_{+}=\{h+t(\tw(h)-h); t>0\}$. Note that $\fH(A,\tw A)\cap\{H,\tw H\}=\emptyset$, $A\cap L_0$ and $\tw A\cap L_0$ are nonempty. Since $(v,H)=0$ and $h+v,h+(h-\tw(h))$ are in the same component of $V-H$, we have $(v,h-\tw(h))>0$. Similarly we have $(\tw(v),\tw(h)-h)>0$.
Now
\begin{align*}\Dr_vf(h)&=2(\tw(h)-h,\tw(v)-v)\\
                       &=2(\tw(h)-h,\tw(v))+2(h-\tw(h),v)>0.
                       \end{align*}
\end{proof}

\subsection{}
Let $\gr f_{\tw}$ be the gradient of $f_{\tw}$ on $V$, that is, for any vector field $X$ on $V$, $Xf_{\tw}=(X,\gr f_{\tw})$. Here we naturally identify $V$ with the tangent space of any point in $V$. Then it is easy to see that $\gr f_{\tw}(v)=2 (1-{}^t \tw)(1-\tw) v$, where ${}^t\tw$ is the transpose of $\tw$ with respect to $(,)$. Let $C_{\tw}:V\times\RR\rightarrow V$ be the integral curve of $\gr f_{\tw}$ with $C_{\tw}(v,0)=v$ for all $v\in V$. Then $$C_{\tw}(v,t)=\exp(2t(1-{}^t\tw)(1-\tw))v.$$

Let $S(V)=\{v\in V; (v,v)=1\}$ be the unit sphere of $V$. For any $0\neq v \in V$, set $\ov{v}=\frac{v}{||v||}\in S(V)$. Define $p:V-\{0\} \rightarrow S(V)$ by $v\mapsto\lim_{t\rightarrow-\infty}\overline{C_{\tw}(v,t)}$.

In order to study the map $p$, we need to understand the eigenspace of $\tw$ on $V$.

\subsection{}\label{fib} Let $\tw \in \tW$. Let $\G_{\tw}$ be the set of elements $\th \in [0, \pi]$ such that $e^{i \th}$ is an eigenvalue of $\tw$ on $V$.

For $\th \in \G_{\tw}$, we define $$V_{\tw}^\th=\{v \in V;  \tw(v)+\tw^{-1}(v)=2\cos \th v\}.$$ Then $V_{\tw} \otimes_\RR \CC$ is the sum of eigenspaces of $V \otimes_\RR \CC$ with eigenvalues $e^{\pm i \th}$. In particular, if $\th$ is not $0$ or $\pi$, then $V^\th_{\tw}$ is an even-dimensional subspace of $V$ over $\RR$ on which $\tw$ acts as a rotation by $\th$.

Since $\tw$ is a linear isometry of finite order, we have an orthogonal decomposition
$$V=\bigoplus_{\th\in\G_w} V^{\th}_{\tw}.$$

Let $\th_0$ be the minimal element in $\G_{\tw}$ with $V^{\th}_{\tw} \neq V^W$ and $V_{\tw}=V^{\th_0}_{\tw} \cap (V^W)^\bot$.

Now for any $v_\th \in V^\th_{\tw}$, \begin{align*} (1-{}^t \tw)(1-\tw) v_\th & =(1-e^{i \th})(1-e^{-i \th}) v_\th=((1-\cos \th)^2+\sin^2 \th) v_\th \\ &=2(1-\cos \th) v_\th. \end{align*} In particular, let $v \in (V^W)^\bot$, then $v=\sum v_\th$, where $v_\th \in V^\th_{\tw}$ and the summation is over all $\th \in \G_{\tw}$ with $\th \ge \th_0$. Then $C_{\tw}(v, t)=\sum \exp(4 t(1-\cos \th)) v_\th$ and $p(v)=\overline v_{\th_0}$ whenever $v_{\th_0}\neq 0$.

Hence $p((V^W)^\bot-V_{\tw}^\bot)=S(V_{\tw})$ and $p: (V^W)^\bot-V_{\tw}^\bot \to S(V_{\tw})$ is a fiber bundle.

\begin{prop}\label{red}
Let $\tw \in \tW$ and $A$ be a Weyl chamber. Then there exists a Weyl Chamber $A'$ such that $\bar A'$ contains a regular point of $V_{\tw}$ and $\tw_A\rightarrow \tw_{A'}$.
\end{prop}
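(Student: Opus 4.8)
The plan is to realize the relation $\tw_A \to \tw_{A'}$ as the finite sequence of wall-crossings traversed by the gradient flow of $f_{\tw}$, run backwards from a generic point of $A$, certifying with Lemma \ref{cr} that each crossing is length non-increasing and using the map $p$ to guarantee that the flow settles against $V_{\tw}$. First I would fix the starting point. Since $V^W=\cap_{H\in\fH}H$, every $H\in\fH$ is a linear hyperplane with unit normal $e_H\in(V^W)^\bot$; hence the arrangement is central and invariant under translation by $V^W$, so each chamber is a cone and $A\cap(V^W)^\bot$ is a nonempty open cone. I pick a generic $a\in A\cap(V^W)^\bot$ subject to: (i) $a\notin V_{\tw}^\bot$, so the $\th_0$-component satisfies $a_{\th_0}\ne0$; (ii) $p(a)=\ov{a_{\th_0}}$ is a regular point of $V_{\tw}$, which is possible because $p$ restricted to $(V^W)^\bot-V_{\tw}^\bot$ is a fiber bundle onto $S(V_{\tw})$, so the open cone $A\cap(V^W)^\bot$ maps to an open subset of $S(V_{\tw})$ that meets the dense set of regular directions; and (iii) the curve below is transversal to $\cup_{H\in\fH}H$ and meets one wall at a time.

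Next I would run the flow $\gamma(t)=C_{\tw}(a,t)=\sum_{\th\ge\th_0}e^{4t(1-\cos\th)}a_{\th}$ for $t\le0$. For each $H\in\fH$ the function $t\mapsto(\gamma(t),e_H)$ is a finite sum of distinct real exponentials, so it is either identically zero or has finitely many zeros; as $||\gamma(t)||$ is nondecreasing in $t$ and bounded by $||a||$ on $(-\infty,0]$ and $\fH$ is finite, $\gamma$ meets $\cup_{H}H$ at finitely many parameters $0>t_1>\cdots>t_N$. This yields chambers $A=A_0,A_1,\dots,A_N$ with $A_{k-1},A_k$ adjacent across the crossed wall $H_k$, so $\tw_{A_k}=s_{i_k}\tw_{A_{k-1}}s_{i_k}$. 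For the terminal chamber $A':=A_N$: the dominant term of $(\gamma(t),e_H)$ as $t\to-\infty$ has a definite sign (the $\th_0$-term if $a_{\th_0}\notin H$, otherwise the first surviving higher-$\th$ term), so $\gamma(t)$ lies on a fixed side of every $H$, hence in a single cone $A'$, for all $t\ll0$. Since $\ov{\gamma(t)}\in A'$ and $\ov{\gamma(t)}\to p(a)$, the chosen regular point $p(a)$ lies in $\ov{A'}$; thus $\ov{A'}$ contains a regular point of $V_{\tw}$.

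I would then certify that each step is a $\to$ move. At the crossing point $h_k=\gamma(t_k)\in H_k$ the velocity $\gamma'(t_k)=\gr f_{\tw}(h_k)$ points, in the direction of increasing $t$, back into $A_{k-1}$; hence for the normal $v_k\in H_k^\bot$ pointing from $A_{k-1}$ toward $A_k$ one has $\Dr_{v_k}f(h_k)=(v_k,\gr f_{\tw}(h_k))<0$. By the contrapositive of Lemma \ref{cr}, $\ell(\tw_{A_k})\ne\ell(\tw_{A_{k-1}})+2$; since conjugation by a simple reflection changes the length by $-2$, $0$, or $2$ (the proof of Lemma \ref{cr} shows the relevant symmetric difference lies in $\{H_k,\tw H_k\}$), this forces $\ell(\tw_{A_k})\le\ell(\tw_{A_{k-1}})$, i.e. $\tw_{A_{k-1}}\to\tw_{A_k}$. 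Chaining the $N$ steps gives $\tw_A\to\tw_{A'}$, completing the proof.

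The step I expect to be the main obstacle is the genericity item (iii): arranging, simultaneously with (i) and (ii), that the flow line avoids every codimension-$\ge2$ stratum of the arrangement and is nowhere tangent to a wall, so that each crossing is a single elementary conjugation and the sign computation above applies verbatim. This is a standard "avoid a measure-zero set" argument—tangency and multiple-crossing loci are cut out by extra exponential-polynomial equations in $a$—but it has to be made compatible with the open conditions (i)–(ii); once it is in place, the remainder is a routine combination of the eigenvalue computation for $\gr f_{\tw}$ and Lemma \ref{cr}.
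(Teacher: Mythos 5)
Your proposal is correct and follows essentially the same route as the paper's own proof: choose a generic point of $A$ avoiding a measure-zero bad set, run the gradient flow of $f_{\tw}$ backwards, certify each wall-crossing as length non-increasing via the contrapositive of Lemma \ref{cr}, and use the limit direction $p(a)$ (a regular point of $V_{\tw}$) to locate the terminal chamber. The only differences are cosmetic: the paper arranges genericity by excluding $C_{\tw}(V^{\ge 2},\RR)\cup p^{-1}(V_{\tw}^{\ge 1})\cup V_{\tw}^\bot$ and gets finiteness from the distinctness of the chambers traversed, and it only needs the non-strict inequality $\Dr_{v}f_{\tw}(h)\le 0$, so your transversality condition (iii) is convenient but not required.
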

\begin{proof}
Let $V_{\tw}^{\ge1}\subset V_{\tw}$ be the complement of the set of regular points of $V_{\tw}$. By $\S$\ref{fib}, $p^{-1}(V_{\tw}^{\ge1})$ is a finite union of submanifolds of codimension $\ge 1$. Let $V^{\ge 2}$ be the complement of all chambers and faces in $V$, that is, the skeleton of $V$ of codimension $\ge 2$. Then $C_{\tw}(V^{\ge 2},\RR)\subset V$ is a countable union of images, under smooth maps, of manifolds with dimension at most $\dim V-1$. Let $$D_{\tw}=\{v\in V; v\notin C_{\tw}(V^{\ge 2},\RR)\cup p^{-1}(V_{\tw}^{\ge 1})\cup V_{\tw}^\bot\}.$$ Then $D_{\tw}$ is a dense subset in the sense of Lebesgue measure.

Choose $y\in A\cap D_{\tw}$. Set $x=p(y)\in V_{\tw}$. Then $x$ is a regular point in $V_{\tw}$. There exists $T>0$ such that for any chamber $B$, $x\in \bar B$ whenever $C_{\tw}(y, -T) \in \bar B$.

Now we define $A_i, H_i, h_i, t_i$ as follows.

Set $A_0=A$. Suppose $A_i$ is defined and $A_i \neq A'$, then we set $t_i=\sup\{t<T; C_{\tw}(y, -t) \in \bar{A}_i\}$. Then $t_i \le T$. Set $h_i=C_{\tw}(y, -t_i)$. By the definition of $D_{\tw}$, $h_i$ is contained in a unique face of $A_i$, which we denote by $H_i$. Let $A_{i+1} \neq A_i$ be the unique chamber such that $H_i$ is a common face of $A_i$ and $A_{i+1}$. Then $C_{\tw}(y, -t_i-\e) \in A_{i+1}$ for sufficiently small $\e>0$.

Since the chambers appear in the above list are distinct with each other. Thus the above procedure stops after finitely many steps. We obtain a finite sequence of chambers $A=A_0, A_1, \cdots, A_r=A'$ in this way. Since $C_{\tw}(y,-T)\in \bar A'$, we have $x\in\bar A'$.

Let $v_i \in V$ such that $(v_i, h_i-h)=0$ for $h \in H_i$ and $h_i-\e v_i \in A_i$ for sufficiently small $\e>0$. Since $C_{\tw}(y,-t_i-\e')\in A_{i+1}$ for sufficiently small $\e'>0$, $\Dr_{v_i} f_{\tw}(h_i)=(v_i,(\gr f_{\tw})(h_i))\le0$. Hence by Lemma \ref{cr}, $\ell(\tw_{A_{i+1}})\le \ell(\tw_{A_i})$ and $\tw_{A_i}\rightarrow\tw_{A_{i+1}}$.

Therefore $\tw_A \to \tw_{A'}$ and $\bar A'$ contains a regular point $x$ of $V_{\tw}$.
\end{proof}

\section{Length formula}

\subsection{}\label{aa} The main goal of this section is to give a length formula for the element $\tw_A$ with $\bar A$ containing a regular point of some subspace of $V$ preserved by $\tw$.

Let $K\subset V$ be a convex subset. Let $\fH_K=\{H\in\fH; K\subset H\}$ and $W_K\subset W$ be the subgroup generated by $s_H$ ($H\in\fH_K$). For any two chambers $A$ and $A'$,  set $\fH(A,A')_K=\fH(A,A')\cap\fH_K$.

Let $A$ be a Weyl chamber. We set $W_{K,A}=x_A^{-1}W_K x_A$. If $\bar A$ contains a regular point $k$ of $K$, then we set  $W_{K,A}=W_{I(K, A)}$ is the parabolic subgroup of $W$ generated by simple reflections $I(K, A)=\{s_H\in S; k \in x_A H\}$.

It is easy to see that $\bar A$ contains a regular point of $K$ if and only if it contains a regular point of $K+V^W$. In this case, $I(K, A)=I(K+V^W, A)$. 

If $A'$ is a Weyl chamber such that $\bar A'$ also contains $k$. Then there exists $x \in W_K$ with $x(A)=A'$. We set $x_{A, A'}=x_A \i x x_A$. Then $x_{A, A'} \in W_{K, A}$ and $$\tw_{A'}=(x x_A) \i \tw (x x_A)=(x_A x_{A, A'}) \i \tw (x_A x_{A, A'})=x_{A, A'} \i \tw_A x_{A, A'}.$$ Moreover, $$\ell(x_{A, A'})=\sharp\fH(C, x_{A, A'}(C))=\sharp\fH(x_A(C), x x_A(C))=\sharp\fH(A, A').$$

We first consider the follows special case.

\begin{lem}\label{f1}
Let $\tw\in\tW$ and $K\subset V_{\tw}^\th$ be a subspace such that $\tw(K)=K$. Let $A$ be a Weyl chamber such that $A$ and $\tw(A)$ are in the same connected component of $V-\cup_{H \in \fH_K}H$. Assume furthermore that $\bar A$ contains a nonzero element $v \in K$ such that for each $H \in \fH$, $v, \tw(v) \in H$ implies that $K \subset H$. Then
$$\ell(\tw_A)=\sharp \fH(A, \tw(A))=\frac{\th}{\pi}\sharp(\fH-\fH_K).$$
\end{lem}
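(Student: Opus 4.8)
The plan is to compute $\ell(\tw_A)=\sharp\fH(A,\tw(A))$ by counting how many reflection hyperplanes a circular rotation arc inside $K$ crosses, and then to extract the factor $\th/\pi$ from the fact that $\tw$ acts on the arrangement by a rotation of order related to $\th$. Throughout I use two bookkeeping facts: since $\tw\in\tW$ conjugates reflections of $W$ to reflections of $W$, it permutes $\fH$; and since $\tw(K)=K$, it preserves $\fH_K$, hence permutes $\fH-\fH_K$. Moreover every $H\in\fH_K$ contains the connected component of $V-\cup_{H'\in\fH_K}H'$ in whose closure both $A$ and $\tw(A)$ lie, so no such $H$ separates $A$ from $\tw(A)$; this already gives $\fH(A,\tw(A))\subseteq\fH-\fH_K$.

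First I would dispose of the degenerate angles. If $\th=0$ then $\tw(v)=v$, and the hypothesis says precisely that $v$ is a regular point of $K$; by \S\ref{aa} two chambers whose closures contain the same regular point of $K$ and which lie in the same component of $V-\cup_{H\in\fH_K}H$ must coincide (the connecting element $x\in W_K$ stabilizes that component, hence is trivial), so $A=\tw(A)$ and both sides vanish. If $\th=\pi$ then $\tw(v)=-v$ and again $v$ is regular; since $v$ and $-v$ are antipodal, every hyperplane through the origin strictly separates them, and comparing the signs of the defining forms on $v\in\bar A$ and on $-v\in\overline{\tw(A)}$ shows every $H\in\fH-\fH_K$ lies in $\fH(A,\tw(A))$, giving $\sharp\fH(A,\tw(A))=\sharp(\fH-\fH_K)=\tfrac{\pi}{\pi}\sharp(\fH-\fH_K)$.

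For the main range $0<\th<\pi$ I work in the plane of rotation. As $\tw$ acts on $V_{\tw}^\th$ as rotation by $\th$, there is an orthogonal complex structure $J$ with $\tw=\cos\th\,I+\sin\th\,J$ on $V_{\tw}^\th$, and $\tw(K)=K$ forces $J(K)=K$. I would first reduce, by genericity, to the case where $v$ lies in the interior of the region $R_A=\{k\in K:k\in\bar A\}$, i.e. $v$ lies on no $H\in\fH-\fH_K$: the asserted equality does not involve $v$, the interior of $R_A$ is nonempty, and any interior $v'$ near $v$ still satisfies the hypothesis since $v'\notin H$ forces $\Span(v',\tw(v'))\not\subseteq H$. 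Then $\tw(v)$ also avoids every $H\in\fH-\fH_K$. Setting $P=\Span(v,Jv)=\Span(v,\tw(v))\subseteq K$ and $\gamma(s)=\cos s\,v+\sin s\,Jv$ for $s\in[0,\th]$, I note that $\gamma$ misses every $H\in\fH_K$ (as $P\subseteq K\subseteq H$), while for $H\in\fH-\fH_K$ the hypothesis gives $P\not\subseteq H$, so $H\cap P$ is a line through the origin meeting the circle in two antipodal points; because $\th\le\pi$ the arc crosses it at most once. A sign comparison then shows $\gamma$ crosses $H$ exactly when $H$ strictly separates $v$ and $\tw(v)$, which (as both endpoints avoid all such $H$) is exactly when $H\in\fH(A,\tw(A))$. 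Hence the number $c$ of hyperplanes of $\fH-\fH_K$ crossed by $\gamma$ equals $\sharp\fH(A,\tw(A))=\ell(\tw_A)$.

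It remains to evaluate $c$, and here the rotational symmetry does the work. Let $N$ be the order of $e^{i\th}$ as a root of unity and write $N\th=2\pi m$; then $\tw$ acts on $P$ as a rotation of order $N$, and the arcs $\gamma_j(s)=\tw^j(\gamma(s))=\gamma(s+j\th)$ for $j=0,\dots,N-1$ traverse the full circle exactly $m$ times. Since $\tw$ permutes $\fH-\fH_K$ and $\tw(H\cap P)=\tw(H)\cap P$, every $\gamma_j$ crosses the family $\{H\cap P:H\in\fH-\fH_K\}$ the same number of times, namely $c$; on the other hand each of the $\sharp(\fH-\fH_K)$ lines $H\cap P$ is crossed transversally exactly twice per revolution, hence $2m$ times in all. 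Summing over $j$ gives $Nc=2m\,\sharp(\fH-\fH_K)$, so $c=\tfrac{2m}{N}\sharp(\fH-\fH_K)=\tfrac{\th}{\pi}\sharp(\fH-\fH_K)$, as required. I expect the main obstacle to be the reduction to a generic $v$ together with the sign comparison: one must check that passing to an interior point of $R_A$ keeps $A$, $\tw(A)$ and their common $\fH_K$-component fixed while removing all boundary incidences, so that the equivalence ``$\gamma$ crosses $H$'' $\Leftrightarrow$ ``$H\in\fH(A,\tw(A))$'' holds with no exceptional cases; once the $\tw$-equivariance of the arrangement $\{H\cap P\}$ is in place, the averaging step is essentially formal.
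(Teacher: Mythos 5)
Your overall strategy (count crossings of the rotation arc from $v$ to $\tw(v)$ with the lines $H\cap P$, then average over the $N$ translates $\tw^j(\gamma)$, which wrap around the circle an integral number of times) is exactly the paper's argument, and your treatment of $\th=0$, $\th=\pi$ and of the averaging step is fine. The gap is in the reduction to a generic $v$. You assert that the interior of $R_A=\bar A\cap K$ in $K$ is nonempty, so that $v$ can be moved off every $H\in\fH-\fH_K$ while staying in $\bar A$. This is false in general: $\bar A\cap K$ can be a single ray even when $\dim K=2$. For instance, take $W=B_2\times B_2$ acting on $V=\RR^2\oplus\RR^2$, $\tw=c\times c$ with $c$ a Coxeter element (rotation by $\pi/2$), and $K$ the graph of a rotation $\phi:\RR^2\to\RR^2$ commuting with $c$; then $V_{\tw}^{\pi/2}=V$, $\fH_K=\emptyset$, the hypothesis ``$v,\tw(v)\in H\Rightarrow K\subset H$'' holds vacuously for every nonzero $v\in K$, and for suitable $\phi$ the set $\bar A\cap K\cong \bar A_1\cap\phi\i(\bar A_2)$ is exactly a wall ray of $A_1$, so every admissible $v$ lies on a hyperplane of $\fH-\fH_K$ and no perturbation inside $\bar A\cap K$ removes this. (Passing instead to the relative interior of the ray does not help, since $cv$ lies on the same hyperplanes as $v$.) This is not a removable technicality: the lemma is invoked in Proposition \ref{con} with $v$ only regular in the codimension-one subspace $P=H_0\cap V_{\tw}$, so $v$ necessarily lies on $H_0\in\fH-\fH_K$, and the weak hypothesis ``$v,\tw(v)\in H\Rightarrow K\subset H$'' (rather than ``$v$ regular in $K$'') is there precisely to cover this case.

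Once genericity fails, two steps of your proof break: the equivalence ``$\gamma$ crosses $H$ $\Leftrightarrow$ $H\in\fH(A,\tw(A))$'' is ill-defined when an endpoint of $\gamma$ lies on $H$, and in the averaging step a crossing occurring at a point $v_j=\tw^j(v)$ is shared between two consecutive arcs and must be attributed to exactly one of them for the identity $Nc=2m\,\sharp(\fH-\fH_K)$ to survive. The paper's proof supplies exactly the missing bookkeeping: it uses half-open arcs $Q_i'=Q_i\sqcup\{v_i\}$ and shows that if $H\in\fH-\fH_K$ contains $v_i$, then $v_{i-1},v_{i+1}\notin H$ (by the separation hypothesis applied to $\tw^{-i}H$ resp.\ $\tw^{-i-1}H$) and $H$ separates $v_{i-1}$ from $v_{i+1}$, hence $H$ lies in exactly one of $\fH(\tw^{i-1}A,\tw^iA)$ and $\fH(\tw^iA,\tw^{i+1}A)$; this makes the sum over $i$ of the separation numbers equal to the sum of the half-open-arc incidence counts, after which each $H$ is still counted exactly twice per revolution. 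You should replace your perturbation step by this argument (or an equivalent careful convention for boundary incidences); the rest of your proof then goes through.
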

\begin{proof}
By our assumption, $\fH(A, \tw(A)) \subset \fH-\fH_K$. Moreover, for any $H \in \fH(A, \tw(A))$, the intersection of $H$ with the closed interval $[v, \tw(v)]$ is nonempty.

If $\th=0$, then $\tw(v)=v$. For $H \in \fH(A, \tw(A))$, $v \in H$ and hence $H \in \fH_K$. That is a contradiction. Hence $\fH(A, \tw(A))=\emptyset$ and $\ell(\tw_A)=\sharp\fH(A, \tw(A))=0$.

If $\th=\pi$, then $\tw(v)=-v$. We see $\fH(A, \tw(A))=\fH- \fH_K$. Thus $\ell(\tw_A)=\sharp(\fH-\fH_K)$.

Now we assume $0<\th<\pi$ and $d$ is the order of $\tw$.  Set $v_i=\tw^i(\bar v) \in S(K)$ for $i \in \ZZ$. Since $\tw$ acts on $K$ by rotation by $\th$, there exists a $2$-dimensional subspace of $K$ that contains $v_i$ for all $i$. Let $S^1$ be the unit circle in this subspace. Let $Q_i \subset S^1$ be the open arc of angle $\th$ connecting $v_i$ with $v_{i+1}$ and $Q'_i=Q_i \sqcup \{v_i\}$.

Let $H \in \fH(\tw^i(A), \tw^{i+1}(A))$. Then by our assumption, $H \in \fH- \fH_K$. If $v_i \notin H$ and $v_{i+1} \notin H$, then $H \cap Q_i \neq \emptyset$. On the other hand, for any $H \in \fH$, if $H \cap Q_i \neq \emptyset$, then $H \in \fH(\tw^i(A), \tw^{i+1}(A))$.

If $H \in \fH -\fH_K$ and $v_i \in H$, then $v_{i-1}, v_{i+1} \notin H$ and $\{v_i\}$ is the intersection of $H$ with the open arc connecting $v_{i-1}$ with $v_{i+1}$ passing through $v_i$. Hence $H$ belongs to exactly one of the two sets: $\fH(\tw^{i-1}(A), \tw^i(A))$ and $\fH(\tw^i(A), \tw^{i+1}(A))$. Therefore

\[\tag{*} \sum_{i=0}^{d-1} \sharp \fH(\tw^i(A), \tw^{i+1}(A))=\sum_{i=0}^{d-1} \sharp \{H \in \fH- \fH_K; H \cap Q'_i \neq \emptyset\}.\]

Notice that each $H \in \fH -\fH_K$ intersects $S^1$ at exactly $2$ points. Hence $H$ appears on the right hand side of $(*)$ exactly $d \th/\pi$-times. Now
$$d \ell(\tw_A)=d \sharp\fH(A, \tw(A))=\frac{d \th}{\pi} \sharp(\fH- \fH_K)$$ and $\ell(\tw_A)=\frac{\th}{\pi} \sharp(\fH- \fH_K)$.
\end{proof}

\begin{prop}\label{f2}
Let $\tw\in\tW$ and $K\subset V_{\tw}^\th$ be a subspace with $\tw(K)=K$. Let $A$ be a Weyl chamber whose closure contains a regular point $v$ of $K$. Then
$$\tw_A=\tw_{K, A} u$$  for some $u \in W_{K, A}$ with $\ell(u)=\sharp \fH(A, \tw(A))_K$ and $\tw_{K, A} \in {}^{I(K, A)} \tW^{I(K, A)}$ with $\tw(I(K, A))=I(K, A)$ and $\ell(\tw_{K, A)}=\frac{\th}{\pi}\sharp(\fH-\fH_K)$. 
\end{prop}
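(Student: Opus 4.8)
The plan is to single out, among all Weyl chambers $B$ whose closure contains the regular point $v$ of $K$, a distinguished one $A'$ for which $\tw_{A'}$ is already in reduced position, and then read off the decomposition by comparing $\tw_A$ with $\tw_{A'}$. Write $J=I(K,A)$, so that $W_{K,A}=W_J$ is the stabilizer in $W$ of $x_A^{-1}(v)$. Since $\tw(K)=K$, the element $\tw$ permutes the hyperplanes of $\fH_K$; consequently $\tw_A$ normalizes $W_J$, and by \S\ref{aa} the chambers $B$ with $v\in\bar B$ are exactly the $B=x(A)$ with $x\in W_K$, for which $\tw_B=x_{A,B}^{-1}\tw_A x_{A,B}$ runs over the full $W_J$-conjugacy orbit $\{y^{-1}\tw_A y:y\in W_J\}$ of $\tw_A$ in $\tW$.

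First I would produce a chamber $A'$ with $v\in\bar{A'}$ such that $A'$ and $\tw(A')$ lie in the same connected component of $V-\cup_{H\in\fH_K}H$, i.e. $\fH(A',\tw A')_K=\emptyset$. I expect to obtain it by a reduction in the spirit of Proposition \ref{red}, moving through chambers that keep $v$ in their closure and crossing only walls in $\fH_K$; equivalently, one takes $\tw_{A'}$ of minimal length in the $W_J$-orbit above. This existence step is the main obstacle: crossing a wall $H\in\fH_K$ changes $\fH(B,\tw B)$ only inside $\{H,\tw H\}\subset\fH_K$ (as in the proof of Lemma \ref{cr}), so the count $\sharp(\fH(B,\tw B)-\fH_K)$ is constant along such moves while the $\fH_K$-part need not strictly decrease at each step. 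Care is therefore needed to drive $\fH(A',\tw A')_K$ down to $\emptyset$.

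Granting $A'$, the reduced position follows cleanly. For $j\in J$ put $H=x_{A'}H_j\in\fH_K$, a wall of $A'$ through $v$; crossing it gives $\tw_{s_H A'}=s_j\tw_{A'}s_j$, and a direct count shows $\ell(\tw_{A'}s_j)>\ell(\tw_{A'})$ (resp. $\ell(s_j\tw_{A'})>\ell(\tw_{A'})$) precisely because $\tw H$ (resp. $H$) lies in $\fH_K$ and hence does not separate $A'$ from $\tw(A')$. Thus $\tw_{A'}\in{}^J\tW^J$. Taking $v$ itself as the required vector in Lemma \ref{f1} (its regularity gives $v\in H\Rightarrow K\subset H$), and using $\fH(A',\tw A')_K=\emptyset$, Lemma \ref{f1} yields $\ell(\tw_{A'})=\frac{\th}{\pi}\sharp(\fH-\fH_K)$. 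Finally, since $\tw$ normalizes $W_K$, the element $\tw_{A'}$ normalizes $W_J$; combined with $\tw_{A'}\in{}^J\tW^J$ and the additivity $\ell(t\,\tw_{A'})=\ell(t)+\ell(\tw_{A'})$ for $t\in W_J$, the reflection $\tw_{A'}s_j\tw_{A'}^{-1}\in W_J$ has length one, giving $\tw_{A'}(J)=J$.

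It remains to set $\tw_{K,A}=\tw_{A'}$ and $u=\tw_{A'}^{-1}\tw_A$. As $\tw_{A'}$ and $\tw_A$ lie in the same coset $\tw_A W_J$, we get $u\in W_J=W_{K,A}$ and $\tw_A=\tw_{K,A}u$. Because $\tw_{K,A}\in\tW^J$, lengths add: $\ell(\tw_A)=\ell(\tw_{K,A})+\ell(u)$, so $\ell(u)=\sharp\fH(A,\tw A)-\frac{\th}{\pi}\sharp(\fH-\fH_K)$. Since $\sharp(\fH(B,\tw B)-\fH_K)$ is the same for all chambers $B$ with $v\in\bar B$ (constancy under $\fH_K$-wall crossing), its common value equals both $\frac{\th}{\pi}\sharp(\fH-\fH_K)$ (from $B=A'$) and $\sharp(\fH(A,\tw A)-\fH_K)$ (from $B=A$); hence $\ell(u)=\sharp\fH(A,\tw A)-\sharp(\fH(A,\tw A)-\fH_K)=\sharp\fH(A,\tw A)_K$, as required.
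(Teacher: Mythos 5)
There is a genuine gap, and it is exactly at the step you yourself flag as ``the main obstacle'': the chamber $A'$ you need does not exist in general. You ask for $A'$ with $v\in\bar{A'}$ and $\fH(A',\tw A')_K=\emptyset$, and you assert this is equivalent to taking $\tw_{A'}$ of minimal length in the $W_J$-conjugacy orbit of $\tw_A$. Both claims fail. Take $\tw=s_H$ for any $H\in\fH$ and $K=H\subset V_{s_H}^{0}$ (so $\th=0$). Then $H\in\fH_K$ and $H$ separates \emph{every} chamber $B$ from $s_H(B)$, so $\fH(B,\tw B)_K\neq\emptyset$ for every chamber $B$ whatsoever; no amount of wall-crossing can empty it. Moreover the $W_J$-conjugacy orbit of $\tw_A=x_A^{-1}s_Hx_A$ is $\{x_A^{-1}s_Hx_A\}$, of length $1$, whereas the proposition requires $\ell(\tw_{K,A})=\frac{0}{\pi}\sharp(\fH-\fH_K)=0$, i.e.\ $\tw_{K,A}=1$. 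This exposes the root of the problem: you are looking for $\tw_{K,A}$ inside the conjugacy orbit $\{y^{-1}\tw_A y:\ y\in W_J\}$, but the correct $\tw_{K,A}$ lives in the \emph{coset} $\tw_A W_J$ (equivalently the double coset $W_J\tw_A W_J$, these coincide because $\tw_A$ normalizes $W_J$), and it is in general not conjugate to $\tw_A$ at all, as the example shows. Geometrically, the issue is that $\tw$ need not stabilize any connected component of $V-\cup_{H\in\fH_K}H$ (a reflection $s_H$ with $H\in\fH_K$ already swaps two of them), so no choice of chamber can put $A'$ and $\tw(A')$ in the same component; what one must change is the element, not the chamber.

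The paper's proof does precisely that: it writes $\tw_A=u'\tw'u''$ with $u',u''\in W_J$ and $\tw'\in{}^J\tW^J$ the minimal length representative of $W_J\tw_A W_J$, uses $\tw_A W_J\tw_A^{-1}=W_J$ to deduce $\tw'(J)=J$ and to fold the decomposition into $\tw_A=\tw'u$ with $u=(\tw')^{-1}u'\tw'u''\in W_J$, and only then applies Lemma \ref{f1} --- to $\tw'$, which, being in ${}^J\tW^J$, genuinely does keep $C$ and $\tw'(C)$ in the same component of $V-\cup_{H\in\fH_K}H$ (this is where your hypothesis $\fH(\,\cdot\,,\tw'(\,\cdot\,))_K=\emptyset$ is legitimately available). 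Your subsequent steps --- the descent argument for ${}^J\tW^J$, the use of the regular point $v$ in Lemma \ref{f1}, the normalization argument for $\tw_{K,A}(J)=J$, and the length bookkeeping $\ell(u)=\sharp\fH(A,\tw A)_K$ via the constancy of $\sharp(\fH(B,\tw B)-\fH_K)$ --- are all sound, but they must be run on the coset representative $\tw'$ rather than on a conjugate $\tw_{A'}$ of $\tw_A$.
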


\begin{proof}
We may assume that $A$ is the fundamental Weyl Chamber $C$ by replacing $\tw$ by $\tw_A$. We then simply write $J$ for $I(K, C)$. We have that $\tw=u' \tw' u''$ for some $u', u'' \in W_J$ and $\tw' \in \tW^J$. 

Since $\tw W_J \tw \i=W_J$ and $u', u'' \in W_J$, $\tw' W_J (\tw') \i=W_J$. We also have that $\tw' \in {}^J \tW^J$. Hence $\tw'(J)=J$. Set $u=(\tw') \i u' \tw' u'' \in W_J$. Then $\tw=u' \tw' u''=\tw' u$. 

Since $u$ acts on $K$ trivially, $\tw' K=K$ and $K\subset V_{\tw'}^\th$. By Lemma \ref{f1} $\ell(\tw')=\frac{\th}{\pi}\sharp(\fH-\fH_K)$. Also $$\ell(u)=\sharp\fH(C, u(C))=\sharp\fH(C, u(C))_K=\sharp \fH(C, u\tw'(C))_K=\sharp \fH(C, \tw(C))_K,$$ where the third equality is due to the fact that both $\tw'(C)$ and $C$ belong to $U$.
\end{proof}

\subsection{}\label{cop} 
Let $\tw\in\tW$ and $K\subset V_{\tw}^\th$ be a subspace with $\tw(K)=K$. Let $U$ be a connected component of $V-\cup_{H \in \fH_K} H$. We denote by $\ell(U)$ the number of hyperplanes in $\fH_K$ that separates $U$ and $\tw(U)$. Then by Proposition \ref{f2}, $\ell(\tw_A)=\ell(U)+\frac{\th}{\pi}\sharp(\fH-\fH_K)$ for any Weyl chamber $A \subset U$ such that $\bar A$ contains a regular element of $K$. 

In particular, let $U_0$ be a connected component of $V-\cup_{H \in \fH_{V_{\tw}}} H$ such that $\ell(U_0)$ is minimal among all the connected components. By Proposition \ref{red} and Proposition \ref{f2}, 

(1) $\ell(\tw_A) \ge \ell(U_0)+\frac{\th}{\pi}\sharp(\fH-\fH_{V_{\tw}})$ for any Weyl chamber $A$.

(2) if $A \subset U_0$ and $\bar A$ contains a regular element of $V_{\tw}$, then $\ell(\tw_A)=\ell(U_0)+\frac{\th}{\pi}\sharp(\fH-\fH_{V_{\tw}})$. 

\subsection{} 
Two chambers $A$ and $A'$ are called {\it strongly connected} if they have a common face. For any subspace $K$ of $V$, $A$ and $A'$ are called {\it strongly connected} with respect to $K$ if $\bar A \cap \bar A' \cap K$ spans a codimension $1$ subspace of $K$ of the form $H \cap K$ for some $H \in \fH-\fH_K$. The following result will also be used in the next section. 

\begin{prop}\label{con}
Let $\tw \in \tW$. Let $A$ and $A'$ be Weyl Chambers in the same connected component of $V-\cup_{H \in \fH_K}H$. Assume that $\bar A \cap \bar A' \cap V_{\tw}$ spans $H_0 \cap V_{\tw}$ for $H_0 \in \fH$ and $\tw(H_0 \cap V_{\tw}) \neq H_0 \cap V_{\tw}$, where $H_0$ is the common wall of $A$ and $A'$. Then $$\ell(\tw_A)=\ell(\tw_{A'})=\sharp\fH(A, \tw A)_K+\frac{\th}{\pi}\sharp(\fH-\fH_K).$$
\end{prop}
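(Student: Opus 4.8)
The plan is to prove the two assertions separately: first the length equality $\ell(\tw_A)=\ell(\tw_{A'})$, and then the evaluation of this common length. Throughout I take $K\subset V_{\tw}^\th$ with $\tw(K)=K$ as in \S\ref{cop}, and I write $U$ for the connected component of $V-\cup_{H\in\fH_K}H$ containing $A$ and $A'$.

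For the equality I would argue pointwise with the function $f_{\tw}$, exactly as in Lemma \ref{cr}. By the spanning hypothesis, choose $h$ in the relative interior of $\bar A\cap\bar A'\cap V_{\tw}$; a dimension count shows $h$ lies in the open common face $H_{0,A}=H_{0,A'}$, and by construction $h\in V_{\tw}\subset V_{\tw}^{\th_0}$. Pick $v\in H_0^\bot$ with $h-\e v\in A$ for small $\e>0$. Since $h\in V_{\tw}^{\th_0}$ we have $\gr f_{\tw}(h)=2(1-{}^t\tw)(1-\tw)h=4(1-\cos\th_0)h$, a multiple of $h$, and $v\perp H_0\ni h$, so
$$\Dr_v f_{\tw}(h)=(v,\gr f_{\tw}(h))=4(1-\cos\th_0)(v,h)=0.$$
By Lemma \ref{cr} this rules out $\ell(\tw_{A'})=\ell(\tw_A)+2$; applying the same lemma with the roles of $A$ and $A'$ interchanged and $v$ replaced by $-v$ (so that $h+\e v\in A'$) rules out $\ell(\tw_A)=\ell(\tw_{A'})+2$. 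As the two lengths differ by $0$ or $2$, they coincide. Note that this step uses only that $H_0$ meets $V_{\tw}$ in a face, not the non-degeneracy hypothesis.

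For the evaluation, I first reduce to computing a single length. Since $A$ and $A'$ lie in one component $U$ of $V-\cup_{H\in\fH_K}H$, their unique separating wall $H_0$ lies in $\fH-\fH_K$; moreover $\tw H_0\in\fH_K$ iff $K\subset\tw H_0$ iff $K\subset H_0$ (as $\tw K=K$) iff $H_0\in\fH_K$, so $\tw H_0\notin\fH_K$ as well. By the proof of Lemma \ref{cr} the symmetric difference $\fH(A,\tw A)\,\triangle\,\fH(A',\tw A')$ is contained in $\{H_0,\tw H_0\}$, hence $\sharp\fH(A,\tw A)_K=\sharp\fH(A',\tw A')_K=\ell(U)$ and it suffices to evaluate $\ell(\tw_A)$. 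If $\bar A$ happened to contain a regular point of $K$, Proposition \ref{f2} would immediately give $\ell(\tw_A)=\sharp\fH(A,\tw A)_K+\frac{\th}{\pi}\sharp(\fH-\fH_K)$. In general $\bar A$ need not meet $K$ regularly, so the plan is to reduce to that case by a perturbation: I would move $h$ slightly inside $V_{\tw}$, transverse to $H_0$, to a regular point of $V_{\tw}$. Because near the open-face point $h$ the space $V$ is locally $\bar A\cup\bar A'$ and $V_{\tw}$ crosses $H_0$ transversally (as $V_{\tw}\not\subset H_0$), the two sides of $V_{\tw}$ land in $\bar A$ and $\bar A'$ respectively, exhibiting chambers whose closures meet $V_{\tw}$ regularly and to which Proposition \ref{f2} applies.

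The hard part is exactly this reduction and, with it, the role of the non-degeneracy hypothesis $\tw(H_0\cap V_{\tw})\neq H_0\cap V_{\tw}$. This condition forces $H_0\neq\tw H_0$, which is what keeps the passage from $A$ to $A'$ length-preserving and, more importantly, what makes the rotation count underlying Lemma \ref{f1} and Proposition \ref{f2} go through for $A$ itself: it should guarantee that the number $\sharp(\fH(A,\tw A)-\fH_K)$ of non-$K$ walls separating $A$ from $\tw A$ takes the generic value $\frac{\th}{\pi}\sharp(\fH-\fH_K)$ rather than an exceptional value forced by a $\tw$-stable wall. I expect the delicate points to be the bookkeeping of how $H_0$ and $\tw H_0$ enter $\fH(A,\tw A)$ versus $\fH(A',\tw A')$, and the verification that the perturbed chamber remains in the same $\fH_K$-component $U$ so that its $K$-count is still $\ell(U)$; once these are settled, Proposition \ref{f2} delivers $\ell(\tw_A)=\ell(U)+\frac{\th}{\pi}\sharp(\fH-\fH_K)=\sharp\fH(A,\tw A)_K+\frac{\th}{\pi}\sharp(\fH-\fH_K)$, completing the proof.
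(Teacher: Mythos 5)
There is a genuine gap, and it sits exactly where you flagged it; but the partial arguments you do give are also not sound, for a common reason. Both hinge on your point $h$ lying in the \emph{open} common face $H_{0,A}$, and in general it does not: $h$ belongs to $P=H_0\cap V_{\tw}\subset K$, so it lies on every hyperplane of $\fH_K$ (and possibly on further hyperplanes through $P$ not in $\fH_K$). Consequently Lemma \ref{cr} does not apply at $h$; and even if you extend its conclusion by continuity (the map $h\mapsto \Dr_vf_{\tw}(h)$ is linear, so positivity on the open face only gives $\ge 0$ on the closed face), the value $\Dr_vf_{\tw}(h)=0$ at a boundary point rules out nothing, so the length equality is not established. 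Likewise the local picture ``near $h$ the space $V$ is $\bar A\cup\bar A'$'' is false at such an $h$, so the perturbation need not land a regular point of $K$ in $\bar A$ or $\bar A'$: the hypothesis only says $\bar A\cap\bar A'\cap K$ spans $P$, not that $\bar A$ meets $K$ in a set of full dimension in $K$, and the whole point of the proposition is that it may not.

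The missing idea is how the nondegeneracy hypothesis is actually used. Since $\tw(P)\neq P$ and $P$ has codimension $1$ in $K$, one gets $K=P+\tw(P)$, so a regular point $v$ of $P$ lying in $\bar A\cap\bar A'$ satisfies ``$v,\tw(v)\in H$ implies $K\subset H$'' --- and that, not regularity of $v$ in $K$, is the hypothesis of Lemma \ref{f1}. The paper then chooses $u\in W_K$ with $u\i\tw(U)=U$, applies Lemma \ref{f1} to get $\sharp\fH(A,u\i\tw(A))=\frac{\th}{\pi}\sharp(\fH-\fH_K)$, and builds an explicit bijection from $\fH(A,\tw A)-\fH(A,\tw A)_K$ onto $\fH(A,u\i\tw(A))$ (sending $H$ to $u\i(H)$ when $\tw(v)\in H$ and to $H$ otherwise). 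This computes $\ell(\tw_A)$ and $\ell(\tw_{A'})$ directly, and their equality falls out because $\fH(A,\tw A)_K=\fH(A',\tw A')_K$ for $A,A'$ in the same component of $V-\cup_{H\in\fH_K}H$, so your separate first step is unnecessary. Without this device, or some genuine substitute for the reduction you postponed, the proposal does not prove the proposition.
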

\begin{proof}
Set $K=V_{\tw}$ and $P=H_0 \cap K$. Then $P$ is a codimension $1$ subspace of $K$. Since $P \neq \tw(P)$, $K=P+\tw(P)$. There exists a regular element $v$ of $P$ such that $v \in \bar A \cap \bar A'$. For $H \in \fH$ with $v, \tw(v) \in H$, $P \subset H$ and $\tw(P) \subset H$ and $K \subset H$. 

Since $\tw(K)=K$, $\tw$ permutes the connected components of $V-\cup_{H \in \fH_K}H$. Let $U$ be the connected component that contains $A$ and $A'$. There exists $u \in W_K$ such that $u \i \tw(U)=U$. By Lemma \ref{f1}, $\sharp\fH(A,  u \i \tw(A))=\frac{\th}{\pi}\sharp(\fH-\fH_K)$. 

Now we define a map $\phi:\fH(A, \tw A)-\fH(A, \tw A)_K\rightarrow \fH$ by \[\phi(H)=\begin{cases}u^{-1}(H), &\text{ if $\tw(v)\in H$; }\\ H, &\text{ otherwise. } \end{cases}\]

Notice that $u \tw(v)=\tw(v)$. Thus $\tw(v) \in H$ if and only if $\tw(v) \in u \i(H)$. Therefore the map $\phi$ is injective. Let $H\in\fH(A, \tw A)-\fH(A, \tw A)_K$. If $\tw(v) \in H$, then $v \notin H$. Hence $H$ separates $v$ from $\tw A$, hence $\phi(H)=u^{-1}H$ separates $u^{-1}(v)=v$ from $u^{-1}\tw A$ and $\phi(H)\in\fH(A, u^{-1}\tw A)$. If $\tw(v) \notin H$, then $\phi(H)=H$ separates $u \i \tw(v)=\tw(v)$ from $A$ and hence $\phi(H) \in \fH(A,  u \i \tw A)$. Thus the image of $\phi$ is contained in $\fH(A,  h \i \tw(A))$. 

On the other hand, let $H \in \fH(A,  u \i \tw(A))$. Since $A$ and $u \i \tw(A)$ are both in $U$, $H \notin \fH_K$. If $\tw(v) \in H$, then $H$ separates $v$ from $u \i\tw(A)$ and $u(H)$ separates $v$ from $\tw (A)$. Hence $u(H) \in \fH(A,  \tw(A))$. If $\tw(v) \notin H$, then $H$ separates $\tw(v)$ from $A$ and hence $H \in \fH(A,  \tw(A))$. 

Therefore the image of $\phi$ is $\fH(A,  \tw(A))$. Since $\phi$ is bijective, \begin{align*} \ell(\tw_A) &=\sharp\fH(A,  \tw(A))=\sharp\fH(A,  \tw(A))_K+\sharp\fH(A, u \i \tw(A)) \\ &=\sharp\fH(A,  \tw(A))_K+\frac{\th}{\pi}\sharp(\fH-\fH_K).\end{align*}

Similarly, $\ell(\tw_{A'})=\sharp \fH(A', \tw(A'))_K+\frac{\th}{\pi}\sharp(\fH-\fH_K)$. Since $A$ and $A'$ are in the same connected component of $V-\cup_{H \in \fH_K} H$, $\fH(A,  \tw(A))_K=\fH(A', \tw(A'))_K$. The Proposition is proved.
\end{proof}

\section{Strongly conjugacy}

The following result is proved in \cite{GP}, \cite{GKP} via a case-by-case analysis.

\begin{thm} Let $(W, S)$ be a finite Coxeter group and $\d: W \rightarrow
W$ be an automorphism sending simple reflections to simple reflections. Let $\co$ be a $\d$-twisted
conjugacy class in $W$ and $\co_{\min}$ be the set of minimal length
elements in $\co$. Then

(1) For each $w \in \co$, there exists $w' \in \co_{\min}$ such that
$w \rightarrow_{\d} w'$.

(2) Let $w, w' \in \co_{\min}$, then $w \sim_{\d} w'$.
\end{thm}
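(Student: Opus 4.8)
The plan is to pass to $\tW$ via \S\ref{equiv} and prove both statements simultaneously by induction on $|W|$. Fixing a representative $\tw$ of the conjugacy class in $\tW$ (contained in $W\d$), recall that $A \mapsto \tw_A = x_A\i \tw x_A$ is a bijection from Weyl chambers onto this class with $\ell(\tw_A) = \sharp\fH(A, \tw A)$; thus (1) becomes the assertion that every $\tw_A$ reduces, under $\to$, to an element of minimal length, and (2) that any two minimal length $\tw_A, \tw_{A'}$ are related by $\sim$. Put $K = V_\tw$. Since $K \neq 0$ whenever $W \neq \{1\}$, one has $\fH_K \subsetneq \fH$, so $W_K \subsetneq W$ is a \emph{proper} parabolic; the case $W = \{1\}$ is the trivial base. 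The mechanism is Proposition \ref{f2}: when $\bar A$ contains a regular point of $K$, one has $\tw_A = \tw_{K,A}\,u$ with $u \in W_{I(K,A)}$, $\tw_{K,A} \in {}^{I(K,A)}\tW^{I(K,A)}$ normalizing $W_{I(K,A)}$, and $\ell(\tw_A) = \ell(\tw_{K,A}) + \ell(u)$; moreover $\ell(u) = \ell(U)$ is the corresponding length, in the proper group $W_K$, of the component $U \supset A$ (\S\ref{cop}). Hence minimizing $\ell(\tw_A)$ amounts, after peeling off the fixed factor $\tw_{K,A}$, to the \emph{same} problem for the twisted conjugacy class of $u$ inside $W_{I(K,A)}$, which is the inductive instance.

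For (1), apply Proposition \ref{red} to get $\tw_A \to \tw_{A_1}$ with $\bar A_1$ containing a regular point of $K$, and write $\tw_{A_1} = \tw_{K,A_1} u_1$ with $J := I(K,A_1)$ and $\tw_{K,A_1}(J) = J$. Conjugation by $\tw_{K,A_1}$ permutes the simple reflections of $W_J$, so the inductive (1) for $W_J$ yields $u_1'$ of minimal (twisted) length with $u_1 \to u_1'$ inside $W_J$. This lifts verbatim to $\tW$: for $x \in W_J$, $x(\tw_{K,A_1}u)x\i = \tw_{K,A_1}\bigl(\tw_{K,A_1}\i x\,\tw_{K,A_1}\,u\,x\i\bigr)$ is $\tw_{K,A_1}$ times a twisted conjugate of $u$, while $\ell(\tw_{K,A_1} v) = \ell(\tw_{K,A_1}) + \ell(v)$ for $v \in W_J$ because $\tw_{K,A_1} \in \tW^J$. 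Hence $\tw_{A_1} \to \tw_{K,A_1} u_1'$, which by \S\ref{cop} attains the minimal length $\ell(U_0) + \frac{\th}{\pi}\sharp(\fH - \fH_K)$. Composing gives $\tw_A \to \tw_{K,A_1}u_1'$, proving (1).

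For (2), let $\tw_A, \tw_{A'} \in \co_{\min}$. Proposition \ref{red} gives $\tw_A \to \tw_{A_1}$ with $\bar A_1$ regular; since $\to$ cannot increase length and $\tw_A$ is minimal, every step preserves length, so $\tw_A \approx \tw_{A_1}$, and an equal-length elementary $\to$-step is an elementary strong conjugation, whence $\tw_A \sim \tw_{A_1}$ (likewise $\tw_{A'} \sim \tw_{A_1'}$). Next I would align the two regular chambers to a common frame: within a fixed component one crosses internal walls $H_0$ with $\tw(H_0 \cap K)\neq H_0\cap K$, which preserve length by Proposition \ref{con} and hence give $\approx$- (so $\sim$-) steps, while passage between distinct minimal components is furnished by the inductive (2) applied to the proper group $W_K$. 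This reduces matters to $\tw_{K,B}u$ and $\tw_{K,B}u'$ sharing one frame $J = I(K,B)$, with $u,u'$ minimal in a single twisted class of $W_J$. By the inductive (2) they are strongly conjugate in $W_J$, and each step lifts because $\tw_{K,B} \in \tW^J$ makes lengths additive: if $u' = (\tw_{K,B}\i x\,\tw_{K,B})u\,x\i$ with $\ell\bigl((\tw_{K,B}\i x\,\tw_{K,B})u\bigr) = \ell(x)+\ell(u)$, then $\ell(x\,\tw_{K,B}u) = \ell(x)+\ell(\tw_{K,B}u)$, an elementary strong conjugation of $\tw_{K,B}u$ in $\tW$. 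Hence $\tw_A \sim \tw_{A'}$.

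The step I expect to be the main obstacle is the frame-alignment in (2): carrying out the reduction to standard regular form \emph{inside} the relation $\sim$ (not merely $\to$) and coordinating the two parabolic frames $J, J'$ so that the inductive hypothesis for $W_K$ can be applied through a single $\tw_{K,B}$. This is precisely where Proposition \ref{con} is indispensable, as it is the only device certifying that the realigning moves preserve length and thus qualify as elementary strong conjugations; the delicate verification is that all walls needed to connect the relevant regular chambers of a common component can be chosen to satisfy $\tw(H_0\cap K)\neq H_0\cap K$.
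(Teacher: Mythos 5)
Your overall strategy matches the paper's: pass to $\tW$, induct on $\sharp W$, use Proposition \ref{red} to reach a chamber whose closure meets $V_{\tw}$ regularly, split $\tw_A=\tw_{K,A}u$ by Proposition \ref{f2}, and transfer the problem to $W_{I(K,A)}$ via the length-additivity of \S\ref{par}. Your argument for part (1) is essentially the paper's proof. However, in part (2) there is a genuine gap, and it sits exactly where you flag it: you cannot in general arrange that every wall $H_0$ crossed when connecting two regular chambers of the same component of $V-\cup_{H\in\fH_K}H$ satisfies $\tw(H_0\cap K)\neq H_0\cap K$. Indeed, if $\th_0=0$ then $\tw$ fixes $K=V_{\tw}$ pointwise and $\tw(P)=P$ for \emph{every} $P=H_0\cap K$; and if $\dim K=1$ then $P=\{0\}$ is always $\tw$-stable. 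So Proposition \ref{con} alone cannot certify all the realigning moves, and your reduction to a single frame $J$ does not go through as stated.

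The paper closes this gap with two further devices that your proposal lacks. When $\tw(P)=P$ and $\dim K\ge 2$, it invokes Lemma \ref{gen}: since $P$ is then a nonzero $\tw$-stable subspace of $V_{\tw}$ and the two adjacent chambers share a regular point of $P$, one runs the splitting of Proposition \ref{f2} with respect to $P$ instead of $K$ and applies the inductive hypothesis (2) to the resulting proper subgroup --- a second, nested use of the induction on a \emph{smaller subspace}, not just a smaller group. When $\dim K=1$, the paper argues directly: for $\th_0=\pi$ the two elements coincide, and for $\th_0=0$ an explicit count of separating hyperplanes shows $\ell(\tw_{A_i}x_{A_i,A_{i+1}})=\ell(\tw_{A_i})+\ell(x_{A_i,A_{i+1}})$, exhibiting an elementary strong conjugation; note this last case genuinely produces only $\sim$, not $\approx$, so it cannot be absorbed into the wall-crossing mechanism of Proposition \ref{con}. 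Without these two cases your proof of (2) is incomplete.
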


By $\S$\ref{equiv}, we may reformulate it as follows. 

\begin{thm}\label{main}
Let $(W, S)$ be a finite Coxeter group and $\d: W \rightarrow
W$ be an automorphism sending simple reflections to simple reflections. Set $\tW=\<\d\> \ltimes W$. Let $\co$ be a $W$-conjugacy class in $\tW$ and $\co_{\min}$ be the set of minimal length
elements in $\co$. Then

(1) For each $w \in \co$, there exists $w' \in \co_{\min}$ such that
$w \rightarrow w'$.

(2) Let $w, w' \in \co_{\min}$, then $w \sim w'$.
\end{thm}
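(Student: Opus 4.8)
The plan is to argue by induction on $\dim V-\dim V^W$. By $\S\ref{equiv}$ it suffices to treat the conjugacy class of an element $\tw\in\tW$, and by the dictionary of $\S\ref{fib}$ every member of $\co$ is $\tw_A$ for a Weyl chamber $A$, with $\ell(\tw_A)=\sharp\fH(A,\tw(A))$. The engine of the induction is the length formula: if $\tw\neq 1$ and $\bar A$ contains a regular point of $V_{\tw}$, then Proposition \ref{f2} and $\S\ref{cop}$ give
\[
\ell(\tw_A)=\ell(U)+\tfrac{\th_0}{\pi}\,\sharp(\fH-\fH_{V_{\tw}}),
\]
where $U$ is the component of $V-\cup_{H\in\fH_{V_{\tw}}}H$ containing $A$ and the last term depends only on $\co$. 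So only $\ell(U)$ varies. Now $\tw$ normalizes the reflection subgroup $W_{V_{\tw}}=\langle s_H:H\in\fH_{V_{\tw}}\rangle$ and permutes its chambers --- which are exactly the components $U$ --- and with the base point furnished by Proposition \ref{f2} the induced twist $\s$ sends simple reflections of $W_{V_{\tw}}$ to simple reflections. Since $V_{\tw}\neq 0$ gives $V^{W_{V_{\tw}}}\supseteq V^W+V_{\tw}\supsetneq V^W$, the pair $(W_{V_{\tw}},\s)$ has strictly smaller $\dim V-\dim V^{W_{V_{\tw}}}$ and the inductive hypothesis applies to it; minimizing $\ell(U)$ is precisely its instance of the theorem. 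When $V_{\tw}$ already consists of regular points (so $W_{V_{\tw}}=1$) the formula shows every regular-touching $\tw_A$ is of minimal length, and this, together with $\tw=1$, is the base of the induction.

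For part (1), given $w=\tw_A$ I first invoke Proposition \ref{red} to obtain $\tw_{A_1}$ with $w\to\tw_{A_1}$ and $\bar A_1$ containing a regular point of $V_{\tw}$; say $A_1\subset U_1$. By $\S\ref{cop}(1)$ the minimum of $\ell$ on $\co$ equals $\ell(U_0)+\tfrac{\th_0}{\pi}\sharp(\fH-\fH_{V_{\tw}})$ with $U_0$ a component of least $\ell(U)$, so it remains to pass from $U_1$ to such a minimal component without increasing length. Applying part (1) of the inductive hypothesis to $(W_{V_{\tw}},\s)$ produces a chain of components $U_1,U_2,\dots,U_0$, each obtained from the previous by conjugating by a simple reflection of $W_{V_{\tw}}$, i.e.\ by crossing a single wall $H\in\fH_{V_{\tw}}$, with $\ell(U)$ non-increasing. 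The remaining task is to lift each such crossing to a length-non-increasing $\to$-chain in $\tW$ between chambers of $U_i$ and $U_{i+1}$ that meet $V_{\tw}$ regularly; for this I would join them by a gallery and certify the steps with Lemma \ref{cr}, exactly as in the proof of Proposition \ref{red}.

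For part (2), the formula shows that $\co_{\min}$ is represented by the $\tw_A$ with $\bar A$ meeting $V_{\tw}$ in a regular point and $A$ in a component of least $\ell(U)$. Given two such elements I connect them in two stages. First I move their regular points to a common point of $V_{\tw}$ along a generic path in $V_{\tw}$ that crosses the subspaces $H\cap V_{\tw}$ ($H\in\fH-\fH_{V_{\tw}}$) one at a time; each crossing updates the chamber across a common wall of the type handled by Proposition \ref{con}, which certifies that the corresponding elementary conjugation preserves length and is (elementarily) strongly conjugate, hence a $\sim$-move. Second, at the fixed common regular point the minimal elements are exactly the minimal-length elements of a single $\s$-twisted conjugacy class in $W_{V_{\tw}}$, so part (2) of the inductive hypothesis connects them by $\sim$ inside $W_{V_{\tw}}$, and I lift this to $\tW$ by the wall-crossing procedure of part (1), now arranged to keep the length constant.

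The main obstacle, common to both parts, is exactly this lifting: a move for $(W_{V_{\tw}},\s)$ is a conjugation by a reflection $s_H$ with $H\in\fH_{V_{\tw}}$, which is \emph{not} a simple reflection of $W$, and it must be realized by genuine simple-reflection moves in $\tW$ with the length bookkeeping kept exact throughout a whole gallery. The three geometric inputs are tailored to this: Proposition \ref{red} keeps us on chambers touching $V_{\tw}$ regularly, Lemma \ref{cr} certifies the strict length drops needed for the $\to$-moves of part (1), and Proposition \ref{con} certifies the length-preserving wall crossings that yield the $\sim$-moves of part (2). Carrying out the gallery decomposition of a single reflection crossing, and checking that length-preserving $\to$-steps between minimal elements are strong conjugations, is the delicate heart of the argument.
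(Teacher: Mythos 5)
Your overall strategy --- reduce via Proposition \ref{red} to chambers whose closures contain a regular point of $V_{\tw}$, then use the length formula of Proposition \ref{f2} and $\S$\ref{cop} to convert minimization of $\ell(\tw_A)$ into minimization of $\ell(U)$ over the chambers $U$ of the reflection subgroup $W_{V_{\tw}}$, and induct --- is indeed the paper's strategy. But the step you yourself single out as ``the delicate heart'', namely lifting a conjugation by a reflection $s_H$ with $H\in\fH_{V_{\tw}}$ (not a simple reflection of $W$) to a length-controlled chain of simple-reflection moves in $\tW$, is left entirely undone, and it is exactly here that the paper does something you have not reproduced: it transports $A$ to the fundamental chamber, so that $W_{V_{\tw},A}$ becomes the \emph{standard} parabolic subgroup $W_J$ with $J=I(K,A)$, and writes $\tw_A=\tw_{K,A}u$ with $u\in W_J$, $\tw_{K,A}\in{}^J\tW^J$ and $\tw_{K,A}(J)=J$ (Proposition \ref{f2}). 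The partial-conjugation lemma of $\S$\ref{par} then says that $x\mapsto \tw_{K,A}x$ is equivariant for conjugation by $W_J$ with $\ell(\tw_{K,A}x)=\ell(\tw_{K,A})+\ell(x)$, so a simple-reflection move for $(W_J,\d_1)$ \emph{is} a simple-reflection move in $\tW$ with exact length bookkeeping; no gallery decomposition of a reflection crossing is ever needed. Without this normalization your induction does not close.

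There is a second gap in part (2). Your plan is to join two minimal chambers by a generic path in $V_{\tw}$ and certify each crossing of a wall $P=H_0\cap V_{\tw}$ by Proposition \ref{con}. But Proposition \ref{con} assumes $\tw(H_0\cap V_{\tw})\neq H_0\cap V_{\tw}$, and this hypothesis can fail along your path. The paper treats the $\tw$-stable case separately: if $\dim V_{\tw}\ge 2$ it applies Lemma \ref{gen} to the smaller nonzero $\tw$-stable subspace $P$ (a second appeal to the induction hypothesis, using that both chambers touch a common regular point of $P$), and if $\dim V_{\tw}=1$ it argues directly, showing either $\tw_{A_i}=\tw_{A_{i+1}}$ (when $\th_0=\pi$) or that $\ell(\tw_{A_i}x_{A_i,A_{i+1}})=\ell(\tw_{A_i})+\ell(x_{A_i,A_{i+1}})$, so the two elements are elementarily strongly conjugate (when $\th_0=0$). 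Your proposal has no mechanism for these cases, so as written part (2) is incomplete even granting the lifting.
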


The main purpose of this section is to give a case-free proof of this result. 

\subsection{}\label{par} We first discuss some relation between a conjugacy class in $\tW$ and in a ``smaller'' subgroup. This is a special case of ``partial conjugation'' method in \cite{He1}. 

Let $J \subset S$. Let $\tw \in {}^J \tW^J$ be an element with $\tw(J)=J$. We denote by $\d'$ the automorphism on $W_J$ defined by the conjugation of $\tw$. Set $\tW'=\<\d'\> \ltimes W_J$. Let $\ell'$ be the length function on $\tW'$. Then the map $$f: \tW' \to \tW, \qquad \d'x \mapsto \tw x$$ is equivariant for the conjugation action of $W_J$ and $\ell(f(\d' x))=\ell(x)+\ell(\tw)=\ell_1(\d' x)+\ell(\tw)$. Hence for any $x, x' \in W_J$, $\tw x \to \tw x'$ if and only if $\d' x\to \d' x'$ (in $\tW'$). Similar results hold for $\sim$ and $\approx$. 

\subsection{}\label{p1} We prove Theorem \ref{main} (1). We argue by induction on $\sharp W$. The statement holds if $W$ is trivial. Now we assume that the statement holds for any $(W', S', \d')$ with $\sharp W'<\sharp W$. 

Any element in the conjugacy class of $\tw$ is of the form $\tw_{A'}$ for some Weyl chamber $A'$. Set $K=V_{\tw}$. By Proposition \ref{red}, $\tw_{A'} \to \tw_A$ for some Weyl chamber $A$ such that $\bar A$ contains a regular element of $K$. 

Set $J=I(K, A)$. By Proposition \ref{f2}, $\tw_A=\tw_{K, A} u$, where $u \in W_J$, $\tw_{K, A} \in {}^J \tW^J$ with $\tw_{K, A}(J)=J$ and $\ell(\tw_{K, A})=\frac{\th}{\pi}\sharp(\fH- \fH_K)$. 

Let $\d_1$ be the automorphism on $W_J$ defined by the conjugation of $\tw_{K, A}$. Set $\tW_1=\<\d_1\> \ltimes W_J$. Since $K$ is not contained in $V^W$, there exists $H \in \fH$ such that $K \nsubseteq H$. Thus $W_J \nsubseteq W$. Now by induction hypothesis on $\tW_1$, there exists $u' \in W_J$ such that $\d_1 u'$ is a minimal length element in its conjugacy class in $\tW_1$ and $\d_1 u\to \d_1 u'$. Then $\tw_{K, A} u\to \tw_{K, A} u'$. 

Let $U$ be the connected component of $V-\cup_{H \in \fH_K} H$ that contains $A$. Let $x \in W_J$ with $\d_1 u'=x \i \d_1 u x$. Set $B=x_A x x_A \i$ and $U'=x_A x x_A \i(U)$. Then $\tw_B=x \i \tw_A x=\tw_{K, A} u'$. 
Since $\d_1 u'$ is a minimal length element in its conjugacy class in $\tW_1$, $\ell(U')$ is minimal among all the connected component of $V-\cup_{H \in \fH_K} H$. Hence by Proposition \ref{f2} and $\S$\ref{cop}, $\tw_B=\tw_{K, A} u'$ is a minimal length element in the conjugacy class of $\tw$. Part (1) of Theorem \ref{main} is proved. 

\

To prove Theorem \ref{main} (2), we need the following result. 

\begin{lem}\label{gen}
Assume that Part (2) of Theorem \ref{main} holds for $(W', S', \d')$ with $\sharp W'<\sharp W$. Let $\tw \in \tW$ and $K\subset V_{\tw}$ be a nonzero subspace with $\tw(K)=K$. Let $A,A'$ be two chambers whose closures contain a common regular point $x$ of $K$. Assume further that $\tw_A$ and $\tw_{A'}$ are of minimal length in their conjugacy class of $\tW$. Then $\tw_A\sim\tw_{A'}$.
\end{lem}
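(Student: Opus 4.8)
The plan is to reduce the claim to the inductive hypothesis on the parabolic subgroup attached to $K$, via the partial conjugation method of $\S$\ref{par}. Set $J=I(K,A)$, so that $W_{K,A}=W_J$. Since $\bar A$ and $\bar{A}'$ share the regular point $x$ of $K$, the discussion in $\S$\ref{aa} produces $x_{A,A'}\in W_J$ with $\tw_{A'}=x_{A,A'}^{-1}\tw_A x_{A,A'}$; in particular $\tw_A$ and $\tw_{A'}$ lie in a single $W_J$-conjugacy orbit. Because $K\subset V_{\tw}\subset V_{\tw}^{\th_0}$, Proposition \ref{f2} applies and gives a factorization $\tw_A=\tw_{K,A}\,u$ with $u\in W_J$, $\tw_{K,A}\in{}^J\tW^J$ and $\tw_{K,A}(J)=J$.

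Let $\d_1$ be the automorphism of $W_J$ given by conjugation by $\tw_{K,A}$, put $\tW_1=\langle\d_1\rangle\ltimes W_J$, and let $f\colon\tW_1\to\tW$, $\d_1 y\mapsto\tw_{K,A}\,y$, be the $W_J$-equivariant map of $\S$\ref{par}, which satisfies $\ell(f(\d_1 y))=\ell(y)+\ell(\tw_{K,A})$. Thus $f(\d_1 u)=\tw_A$. Using $\tw_{K,A}(J)=J$ and $x_{A,A'}\in W_J$, a direct computation rewrites $\tw_{A'}=x_{A,A'}^{-1}\tw_{K,A}\,u\,x_{A,A'}=\tw_{K,A}\,u''$, where $u''=\tw_{K,A}^{-1}x_{A,A'}^{-1}\tw_{K,A}\,u\,x_{A,A'}\in W_J$; hence $\tw_{A'}=f(\d_1 u'')$ and, in $\tW_1$, $\d_1 u''=x_{A,A'}^{-1}(\d_1 u)\,x_{A,A'}$. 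So $\d_1 u$ and $\d_1 u''$ are $W_J$-conjugate in $\tW_1$.

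It remains to check that $\d_1 u$ and $\d_1 u''$ are of minimal length in their (common) $\tW_1$-conjugacy class, and then to invoke the inductive hypothesis. If $\d_1 y$ is $\tW_1$-conjugate to $\d_1 u$, say $\d_1 y=z^{-1}(\d_1 u)z$ with $z\in W_J$, then by equivariance $f(\d_1 y)=z^{-1}\tw_A z$ is $W_J$-conjugate, hence $\tW$-conjugate, to $\tw_A$; minimality of $\tw_A$ gives $\ell(f(\d_1 y))\ge\ell(\tw_A)$, and the length identity for $f$ then yields $\ell(y)\ge\ell(u)$. Thus $\d_1 u$ is minimal, and the same argument with $\tw_{A'}$ shows $\d_1 u''$ is minimal. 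Since $K\neq\{0\}$, there is $H\in\fH$ with $K\not\subset H$, so $J\subsetneq S$ and $\sharp W_J<\sharp W$; the assumed Part (2) of Theorem \ref{main} for $\tW_1$ gives $\d_1 u\sim\d_1 u''$. Finally $\S$\ref{par} transports $\sim$ through $f$, whence $\tw_A=f(\d_1 u)\sim f(\d_1 u'')=\tw_{A'}$.

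I expect the main obstacle to be the descent of minimality from $\tW$ to $\tW_1$, together with the verification that the passage from $A$ to $A'$ is entirely absorbed into $W_J$ with the \emph{same} factor $\tw_{K,A}$; both of these rest on the facts $x_{A,A'}\in W_J$ (from $\S$\ref{aa}) and $\tw_{K,A}(J)=J$ (from Proposition \ref{f2}), so the conceptual work is in assembling these inputs rather than in any further computation.
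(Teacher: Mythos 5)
Your proof is correct and follows essentially the same route as the paper: factor $\tw_A=\tw_{K,A}u$ via Proposition \ref{f2}, observe that $\tw_{A'}$ is obtained by conjugating by an element of $W_{K,A}$ coming from $\S$\ref{aa}, pass to $\tW_1=\langle\d_1\rangle\ltimes W_J$, apply the inductive hypothesis, and transport $\sim$ back through the map of $\S$\ref{par}. The only difference is that you explicitly verify that minimality descends from $\tW$ to $\tW_1$, a point the paper leaves implicit.
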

\begin{proof}
Set $J=I(K, A)$. By Proposition \ref{f2}, $\tw_A=\tw_{K,A} u$, where $u\in W_J$, $\tw_{K, A} \in {}^J \tW^J$ with $\tw_{K, A}(J)=J$. We define $\d_1, \tW_1$ as in $\S$\ref{p1}. Let $\ell_1$ be the length function on $\tW_1$. Let $x \in W_K$ with $x(A)=A'$. Set $y=x_A \i x x_A$. Then $y \in W_J$ and $\tw_{A'}=y \i \tw_{K,A} u y$. Since $\ell(\tw_{A'})=\ell(\tw_A)$, $\ell_1(y \i \d_1 u y)=\ell_1(\d_1 u)$. Hence by induction hypothesis on $\tW_1$, $y \i \d_1 u y \sim \d_1 u$. By $\S$\ref{par}, $\tw_A' \sim \tw_A$. 
\end{proof}

\subsection{}\label{p2} Now we prove Theorem \ref{main} (2). As in $\S$\ref{p1}, we assume that the statement holds for any $(W', S', \d')$ with $\sharp W'<\sharp W$. Set $K=V_{\tw}$. Let $\tw_A, \tw_{A'} \in \co_{\min}$. By Proposition \ref{red}, it suffices to consider the case where $\bar A$ and $\bar A'$ both contain regular elements of $K$. 
Let $U$ (resp. $U'$) be the connected component of $V-\cup_{H \in \fH_K} H$ that contains $A$ (resp. $A'$). Then by $\S$\ref{cop}, $\ell(U)=\ell(U')$ are minimal among all the connected component of $V-\cup_{H \in \fH_K} H$. 

We define $\d_1, \tW_1$ as in $\S$ \ref{p1}. Let $x \in W_K$ with $x(U)=U'$. Set $y=x_{A, x(A)}$. Then $\tw_{x(A)}=y \i \tw_A y$ and $$\ell(\tw_{x(A)})=\ell(U')+\frac{\th}{\pi}\sharp(\fH- \fH_K)=\ell(U)+\frac{\th}{\pi}\sharp(\fH- \fH_K)=\ell(\tw_A).$$ Hence $y \i \d_1 u y$ is a minimal length element in the conjugacy class of $\tW_1$ that contains $\d_1 u$. By induction hypothesis on $\tW_1$, $y \i \d_1 u y \sim \d_1 u$. Hence $\tw_{x(A)}=y \i \tw_{K, A} u y \sim \tw_{K, A} u=\tw_A$. 

Thus to prove part (2), it suffices to prove that $\bar A$ and $\bar A'$ are in the same connected component of $V-\cup_{H \in \fH_K} H$ and $\tw_A, \tw_{A'} \in \co_{\min}$, then $\tw_A \sim \tw_{A'}$. 

There exists a sequence of chambers $A=A_0,\cdots,A_r=A'$ in the same connected component of $V-\cup_{H \in \fH_K} H$ whose closures contain regular points of $K$ and for any $i$, $A_i$ and $A_{i+1}$ are strongly connected to $A_{i+1}$ with respect to $K$. By $\S$\ref{cop}, $\tw_{A_i}$ is of minimal length. It suffices to prove that $\tw_{A_i} \sim \tw_{A_{i+1}}$ for any $i$. 

By definition, $\bar{A_i}\cap\bar A_{i+1}\cap K$ spans $H_0\cap K$ for some $H_0\in\fH-\fH_K$. Set $P=H_0 \cap K$. Then $\bar A_i$ and $\bar A_{i+1}$ contains a common regular element $v$ of $P$. 

Case 1: $\tw(P) \neq P$. There is a sequence of chambers $A_i=B_0,\cdots,B_t=A_{i+1}$ in the same component of $V-\cup_{H\in\fH_K}H$ such that for any $j$, $v \in \bar B_j$, $B_j$ and $B_{j+1}$ share a common wall. By Proposition \ref{con}, $\ell(\tw_{B_0})=\ell(\tw_{B_1})=\cdots=\ell(\tw_{B_t})$. Since $B_j$ and $B_{j+1}$ are strongly connected, $\tw_{B_j}\approx\tw_{B_{j+1}}$. In particular, $\tw_{A_i} \approx\tw_{A_{i+1}}$. 

Case 2: $P=\tw(P)$ and $\dim(K) \ge 2$. Then $\dim(P) \ge 1$ is a nonzero subspace of $V_{\tw}$. Apply Lemma \ref{gen} for $P$, we obtain that $\tw_A\sim\tw_{A'}$.

Case 3: $\dim(K)=1$. Then $P=\{0\}$. By $\S$\ref{fib}, $\th_0=0$ or $\pi$. If $\th_0=\pi$, then $\tw$ acts as $-\Id$ on $(V^W)^\bot$, hence $\tw_{A_i}=\tw_{A_{i+1}}$ acts as $-\Id$ on $(V^W)^\bot$. Now assume that  $\th_0=0$. Let $v$ be a regular element of $K$ with $v \in \bar A_i$. Then $-v \in \bar A_{i+1}$. Since $\tw(v)=v$, then $\fH(A_i,\tw(A_i))=\fH(A_{i+1}, \tw(A_{i+1}))\subset\fH_K$. So \begin{align*} \fH(A_i,\tw(A_{i+1}))-\fH(A_i,\tw(A_{i+1}))_K &=\fH(A_i, A_{i+1})-\fH(A_i,A_{i+1})_K \\ &=\fH(A_i,A_{i+1}).\end{align*} Let $x\in W$ with $x(A_{i+1})=A_i$. By $\S$\ref{aa}, $\tw_A x_{A_i, A_{i+1}}=x_{A_i} \i \tw x x_{A_i}$ and
\begin{align*}
\ell(\tw_{A_i} x_{A_i, A_{i+1}})&=\sharp \fH(C,\tw_{A_i} x_{A_i, A_{i+1}}(C))=\sharp \fH(x_{A_i}(C),\tw x x_{A_i}(C)) \\ &=\sharp(A_i, \tw(A_{i+1}))=\sharp\fH(A_i,\tw(A_{i+1}))_K+\sharp\fH(A_i,A_{i+1}) \\  &=\sharp\fH(A_i,\tw(A_i))_K+\sharp\fH(A_i,A_{i+1}) \\ &=\ell(\tw_{A_i})+\ell(x_{A_i, A_{i+1}}).
\end{align*}
Hence $\tw_{A_i} \sim \tw_{A_{i+1}}$. 

\section{Elliptic conjugacy class}

\subsection{} 
We call a conjugacy class $\co$ of $\tW$ (or an element of it) {\it elliptic} if for some (or equivalently, any) element $\tw \in \co$, points in $V$ fixed by $\tw$ are contained in $V^W$. By \cite[Lemma 7.2]{He1}, $\co$ is elliptic if and only if $\co \cap (\<\d\>\ltimes W_J)=\emptyset$ for any proper subset $J$ of $S$ with $\d(J)=J$. In particular, the definition of elliptic conjugacy class/element is independent of the choice of $V$. 

We've shown in the previous section that any two minimal length element in a conjugacy class of $\tW$ are strongly conjugate. In this section, we'll obtained a stronger result for elliptic conjugacy classes. 

Let $\tw \in \tW$ be a minimal length element in its conjugacy class. Let $\cp_{\tw}$ be the set of sequences $\mathbf i=(i_1, \cdots, i_t)$ of $S$ such that $$\tw \xrightarrow{i_1} s_{i_1} \tw s_{i_1} \xrightarrow{i_2} \cdots \xrightarrow{i_t} s_{i_t} \cdots s_{i_1} \tw s_{i_1} \cdots s_{i_t}.$$ Since $\tw$ is a minimal element element, all the elements above are of the same length. We call such $\mathbf i$ a {\it path} from $\tw$ to $s_{i_t} \cdots s_{i_1} \tw s_{i_1} \cdots s_{i_t}$. Let $\cp_{\tw, \tw}$ be the subset of $\cp$ consisting of all paths from $\tw$ to itself. 

Let $W_{\tw}=\{w \in W; \ell(w \i \tw w)=\ell(\tw)\}$ and $Z_W(\tw)=\{w \in W; w \tw=\tw w\} \subset W_{\tw}$. Then we have a natural map $$\t_{\tw}: \cp_{\tw} \to W_{\tw}, \qquad (i_1, \cdots, i_t) \mapsto s_{i_1} \cdots s_{i_t}.$$

Let $C_{\tw}$ be the set of all Weyl chambers $A$ with $\ell(\tw_A)=\ell(\tw)$. Then the map $A \mapsto x_A$ gives a bijection between $C_{\tw}$ and $W_{\tw}$. 

We call an element $v \in V$ {\it subregular} if it is either regular in $V$ or regular in $H$ for some $H \in \fH$. Let $V^{subreg} \subset V$ be the set of all subregular element. Then $V-V^{subreg}$ is a finite union of codimension $2$ subspaces. 

\begin{lem}\label{conn}
Let $A$, $A'$ be Weyl chambers in $C_{\tw}$. Then there is a path from $\tw_A$ to $\tw_{A'}$ if and only if $A$ and $A'$ are in the same connected component of $(\cup_{A \in C_{\tw}} \bar A) \cap V^{subreg}$. 
\end{lem}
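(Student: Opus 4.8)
The plan is to translate both sides of the equivalence into the combinatorics of ``galleries'' inside $C_{\tw}$ and check that the two translations produce the same equivalence relation. First I would record the geometric dictionary already contained in the discussion preceding Lemma~\ref{cr}: if two chambers $B,B'$ share a common face $H_B=H_{B'}$ with $H\in\fH$, then setting $s_i=x_B\i s_H x_B$ one has $\tw_{B'}=s_i\tw_B s_i$, so crossing a single wall is exactly one conjugation by a simple reflection, and $x_{B'}=s_H x_B$. Since $\tw$, and hence every $\tw_B$ with $B\in C_{\tw}$, has minimal length in the conjugacy class, the inequality $\ell(\cdot)\le\ell(\cdot)$ built into the relation $\xrightarrow{s_i}$ is forced to be an equality. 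Consequently a path from $\tw_A$ to $\tw_{A'}$ is precisely the same datum as a sequence of chambers $A=A_0,A_1,\cdots,A_t=A'$ in which every $A_k$ lies in $C_{\tw}$ and consecutive chambers are strongly connected (share a face); reading off $s_{i_k}=x_{A_{k-1}}\i s_{H^{(k-1)}}x_{A_{k-1}}$ at the common wall $H^{(k-1)}$ of $A_{k-1},A_k$ recovers the index sequence.

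Next I would unwind the topological side. By the description just before the lemma, a point is subregular exactly when it lies in an open chamber or in the relative interior of a face, so $V^{subreg}$ is $V$ with its codimension-$\ge 2$ skeleton deleted, and $X\cap V^{subreg}$, where $X=\cup_{A\in C_{\tw}}\bar A$, decomposes into the open chambers $A$ with $A\in C_{\tw}$ together with the open faces contained in some such $\bar A$. The key local observation is that an open face $F$ shared by chambers $B,B'$ lies in $X$ iff $B\in C_{\tw}$ or $B'\in C_{\tw}$, whereas a continuous path inside $X\cap V^{subreg}$ can pass from the open chamber $B$ across $F$ into the open chamber $B'$ only when \emph{both} $B,B'\in C_{\tw}$ (otherwise the far side $B'$ is not in $X$ and $F$ is a dead end). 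Because $V^{subreg}$ omits all codimension-$\ge 2$ faces, any path in $X\cap V^{subreg}$ meets only open chambers and open faces, so the finite ordered list of chambers it successively enters is a sequence of strongly connected chambers lying entirely in $C_{\tw}$.

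I would then conclude. Since $X\cap V^{subreg}$ is locally path-connected (near a chamber point it is an open ball, near a face point it is either a half-ball or two half-balls glued along the flat face), its connected components equal its path components, so the previous paragraph shows that $A,A'$ lie in the same component of $X\cap V^{subreg}$ iff they are joined by a sequence of strongly connected chambers through $C_{\tw}$. Combining this with the dictionary of the first paragraph gives the lemma: a path from $\tw_A$ to $\tw_{A'}$ exists iff such a $C_{\tw}$-gallery exists iff $A,A'$ lie in the same connected component of $X\cap V^{subreg}$.

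I expect the main obstacle to be the ``only if'' half of the topological translation, namely upgrading an arbitrary continuous path in $X\cap V^{subreg}$ to an honest gallery. The care needed is exactly the face analysis: one must show that a path may touch a face $F$ whose opposite chamber is not in $C_{\tw}$ yet can never cross it, so that every genuine chamber-to-chamber transition occurs across a face both of whose sides lie in $C_{\tw}$ --- which is precisely what makes the corresponding conjugation step length-preserving and hence a legitimate path step. Making ``the sequence of chambers visited'' rigorous (via the finitely many components of the preimages of the open chambers under the path, together with transversality to faces, using that codimension-$\ge 2$ strata are excluded) is routine once this local dichotomy is established.
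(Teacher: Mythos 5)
Your proof is correct and follows essentially the same route as the paper's: both directions are handled by identifying a path from $\tw_A$ to $\tw_{A'}$ with a gallery of pairwise strongly connected chambers lying entirely in $C_{\tw}$, and identifying such galleries with connected components of $(\cup_{A\in C_{\tw}}\bar A)\cap V^{subreg}$. The paper states the two translations in three lines; you supply the local face analysis and the length-forcing argument that the paper leaves implicit, but the content is the same.
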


\begin{proof} If $A$ and $A'$ are in the same connected component of $(\cup_{A \in C_{\tw}} \bar A) \cap V^{subreg}$, then there is a  sequence of Weyl chambers $A=A_0, A_1, \cdots, A_r=A'$ in $C_{\tw}$ such that $A_i$ and $A_{i+1}$ are strongly connected. Let $H_i$ be the common wall of $A_i$ and $A_{i+1}$. Then $x_{A_i} \i s_{H_i} x_{A_i}=s_i$ for some $i \in S$ and $$\tw_A \xrightarrow{i_0} \tw_{A_1} \xrightarrow{i_1} \cdots \xrightarrow{i_{r-1}} \tw_{A'}.$$ Therefore $(i_0, \cdots, i_{r-1}) \in \cp_{\tw}$. 

On the other hand, any path $(i_0, \cdots, i_{r-1})$ from $\tw_A$ to $\tw_{A'}$ gives a sequence $A=A_0, A_1, \cdots, A_r=A'$ in $C_{\tw}$ such that $A_i$ and $A_{i+1}$ are strongly connected. Hence $A$ and $A'$ are in the same connected component of $(\cup_{A \in C_{\tw}} \bar A) \cap V^{subreg}$. 
\end{proof}

\

Our main result in this section is 

\begin{thm}\label{main2}
Let $\co$ be an elliptic conjugacy class of $\tW$ and $\tw \in \co_{\min}$. Then the map $\t_{\tw}: \cp_{\tw} \to W_{\tw}$ is surjective. 
\end{thm}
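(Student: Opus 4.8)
The plan is to recast surjectivity of $\t_{\tw}$ as a connectedness statement and then prove that statement by induction on $\sharp W$, upgrading the strong-conjugacy arguments of $\S$\ref{p2} from $\sim$ to the path relation $\approx$. First I would set up the dictionary between paths and chambers. Under the bijection $A \mapsto x_A$ between $C_{\tw}$ and $W_{\tw}$ (with $\tw = \tw_C$), a path $\tw = \tw_{A_0} \xrightarrow{i_1} \cdots \xrightarrow{i_t} \tw_{A_t}$ coming from a chain $C = A_0, \ldots, A_t$ of wall-adjacent chambers of $C_{\tw}$ satisfies $\t_{\tw}(i_1, \ldots, i_t) = x_{A_t}$, since $x_{A_k} = s_{i_1} \cdots s_{i_k}$ at each step. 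Combined with Lemma \ref{conn}, this shows that $\t_{\tw}$ is surjective if and only if any two chambers of $C_{\tw}$ are joined by a chain of wall-adjacent chambers of $C_{\tw}$, equivalently that $(\cup_{A \in C_{\tw}} \bar A) \cap V^{subreg}$ is connected; in the language of the previous section this is exactly the assertion that any two minimal length elements of $\co$ are equivalent under $\approx$. Thus the theorem is the elliptic strengthening of Theorem \ref{main}(2), where only $\sim$ was obtained.

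Next I would prove, assuming Theorem \ref{main2} for all $(W', S', \d')$ with $\sharp W' < \sharp W$, the $\approx$-analogue of Lemma \ref{gen}: if $K \subseteq V_{\tw}$ is a nonzero $\tw$-stable subspace and $A, A'$ are minimal chambers whose closures share a common regular point of $K$, then $\tw_A \approx \tw_{A'}$. As in $\S$\ref{par}, write $\tw_A = \tw_{K, A}\, u$ with $u \in W_J$, $J = I(K, A)$, and form $\d_1, \tW_1 = \<\d_1\> \ltimes W_J$. The point is that the partial conjugation map $f$ of $\S$\ref{par} carries $\approx$ in $\tW_1$ to $\approx$ in $\tW$; that $\d_1 u$ is of minimal length in its $\tW_1$-class because $\ell = \ell_1 + \ell(\tw_{K, A})$; and that $\d_1 u$ is \emph{elliptic} in $\tW_1$, since it acts on $V$ as $\tw_{K, A} u = \tw$, whose fixed space is $V^{\tw} = V^W \subseteq V^{W_J}$. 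Hence the inductive hypothesis gives surjectivity of $\t_{\d_1 u}$, which transports back through $f$ to the desired path joining $\tw_A$ and $\tw_{A'}$.

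With this lemma in hand I would run the reduction of $\S$\ref{p2}, now tracking $\approx$ throughout. By Proposition \ref{red} (and the fact that a length-preserving $\to$ is $\approx$) every minimal chamber is $\approx$-equivalent to one whose closure meets $V_{\tw}$ in a regular point, so it suffices to connect such regular minimal chambers. Two of them lying in the same connected component of $V - \cup_{H \in \fH_{V_{\tw}}} H$ are joined by a chain of wall-adjacent regular minimal chambers, and consecutive terms are handled by the three cases of $\S$\ref{p2} with common wall $H_0$: when $\tw(H_0 \cap V_{\tw}) \neq H_0 \cap V_{\tw}$, Proposition \ref{con} gives equal lengths and hence $\approx$; when $\tw$ fixes $P = H_0 \cap V_{\tw}$ with $\dim V_{\tw} \ge 2$, so $P \neq 0$, the lemma above applied to $K = P$ gives $\approx$; and when $\dim V_{\tw} = 1$ ellipticity forces $\th_0 = \pi$, so $\tw$ acts as $-\Id$ on $(V^W)^\bot$, every chamber lies in $C_{\tw}$ and all $\tw_A$ coincide. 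To pass between two components $U, U'$, choose $x \in W_{V_{\tw}}$ with $x(U) = U'$ and set $y = x_{A, x(A)} \in W_J$; then $\tw_{x(A)} = y^{-1} \tw_A y$ has the same length as $\tw_A$, so $y$ lies in the length-preserving set for the elliptic minimal element $\d_1 u$ in $\tW_1$, and the inductive surjectivity yields a path realizing $y$, i.e. $\d_1 u \approx y^{-1} \d_1 u y$, which transports back to $\tw_A \approx \tw_{x(A)}$ via $f$. Chaining these moves connects all regular minimal chambers, proving connectedness.

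I expect the main obstacle to be the bookkeeping that keeps the reduced element $\d_1 u$ simultaneously elliptic and of minimal length in $\tW_1$, since this is precisely where the hypothesis of ellipticity is indispensable. For a non-elliptic class one would have $\th_0 = 0$ and a nonzero fixed space of $\tw$ transverse to $V^{W_J}$, the reduction to $\tW_1$ would fail to be elliptic, and the inductive step — the only mechanism that upgrades the strong-conjugacy relation $\sim$ of $\S$\ref{p2} to the path relation $\approx$, in particular across distinct components of $V - \cup_{H \in \fH_{V_{\tw}}} H$ — would break down, consistent with the fact that $\t_{\tw}$ need not be surjective in the non-elliptic case.
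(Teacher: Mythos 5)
Your proposal is correct and follows essentially the same route as the paper: reduce surjectivity to connectedness of $(\cup_{A \in C_{\tw}} \bar A) \cap V^{subreg}$ via Lemma \ref{conn}, induct on $\sharp W$ through the partial-conjugation reduction to $\tW_1$ (checking that $\d_1 u$ remains elliptic and of minimal length there, so that the inductive surjectivity produces a path realizing the specific conjugator and hence pins down the endpoint chamber), and upgrade the case analysis of $\S$\ref{p2} from $\sim$ to $\approx$. The one minor difference is Case 2: you handle it by proving an $\approx$-analogue of Lemma \ref{gen}, whereas the paper observes that for an elliptic class this case is vacuous (ellipticity forces $\th_0 \neq 0$, and if $\th_0 \neq \pi$ every $\tw$-stable subspace of $V_{\tw}$ is even-dimensional, so $H_0 \cap V_{\tw}$ is never $\tw$-stable); both work.
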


\begin{proof} 
We argue by induction on $\sharp W$. The statement holds if $W$ is trivial. Now assume that the statement holds for $(W', S', \d')$ with $\sharp W'<\sharp W$. 

Set $K=V_{\tw}$ and $Z=(\cup_{A \in C_{\tw}} \bar A) \cap V^{subreg}$. By Lemma \ref{conn}, it suffice to show that $Z$ is connected. 

Let $A \in C_{\tw}$. Then by the proof of Proposition \ref{red}, there exists a Weyl Chamber $A'$ such that $\bar A'$ contains a regular element of $K$ and there is a curve in $Z$ connecting $A$ and $A'$. Now it suffices to show that for any $A, B \in C_{\tw}$ such that $\bar A$ and $\bar B$ contain regular element of $K$, $A$ and $B$ are in the same connected component of $Z$. 

Let $U$ be the connected component of $V-\cup_{H \in \fH_K} H$ that contains $A$. Let $x \in W_K$ with $x(B) \subset U$. Let $J=I(K, B)$ and $\d_1$ be the automorphism on $W_J$ defiend by the conjugation of $\tw_{K, B}$. Set $\tW_1=\<\d_1\> \ltimes W_J$ and $V_1=\sum_{H \in \fH, s_H \in W_J} H^\bot$. The action of $W$ on $V$ induces an injection $W_J \to GL(V_1)$. Also $\d_1(V_1)=V_1$. Hence we may regard $\tW_1$ as a reflection subgroup of $V_1$. By Proposition \ref{f2}, $\tw_B=\tw_{K, B} u$ for some $u \in W_J$. Let $v \in V_1$ with $\tw_B(v)=v$. Then $v \in V_1\cap V^W \subset V_1^{W_J}$. Thus $\d_1 u$ is elliptic in $\tW_1$. 

By $\S$\ref{cop}, $\ell(\tw_{x(B)})=\ell(\tw_B)$. Hence by $\S$\ref{par}, $\d_1 u$ and $x_{B, x(B)} \i \d_1 u x_{B, x(B)}$ are both of minimal length in their conjugacy class in $\tW_1$. Thus by induction hypothesis on $\tW_1$, $\d_1 u \approx x_{B, x(B)} \i \d_1 u x_{B, x(B)}$. Hence by $\S$\ref{par}, $\tw_B \approx \tw_{x(B)}$. Hence by Lemma \ref{conn}, $B$ and $x(B)$ are in the same connected component of $Z$. 

Now $A$ and $x(B)$ are in the same connected component $U$ of $V-\cup_{H \in \fH_K} H$. By $\S$\ref{p2}, there exists a sequence of chambers $A=A_0,\cdots,A_r=x(B)$ in $C_{\tw}$ such that for any $i$, $A_i$ and $A_{i+1}$ are strongly connected with respect to $K$. By definition, $\bar{A_i}\cap\bar A_{i+1}\cap K$ spans $H_0\cap K$ for some $H_0\in\fH-\fH_K$. If $\th_0=\pi$, then $\tw_{A_i}=\tw_{A_{i+1}}$ acts as $-\Id$ on $(V^W)^\bot$. If $\th_0 \neq \pi$, then any $\tw$-stable subspace of $K$ is even-dimensional and $\tw(H_0 \cap K) \neq H_0 \cap K$. Thus we are in case 1 of $\S$\ref{p2}. Hence $\tw_{A_i} \approx \tw_{A_{i+1}}$. Therefore $\tw_A \approx \tw_{x(B)}$. By Lemma \ref{conn}, $A$ and $x(B)$ are in the same connected component of $Z$. 

Therefore $A$ and $B$ are in the same connected component of $Z$. 
\end{proof}

\

The following results follows easily from Theorem \ref{main2}. Both results are known but was proved by a case-by-case analysis. 

\begin{cor}
Let $\co$ be an elliptic conjugacy class of $\tW$. Let $\tw, \tw' \in \co_{\min}$. Then $\tw \approx \tw'$. 
\end{cor}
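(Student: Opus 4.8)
The plan is to deduce this directly from Theorem~\ref{main2}, essentially reading off the conclusion from the surjectivity of $\t_{\tw}$ through the dictionary established in $\S$\ref{equiv}. First I would fix $\tw \in \co_{\min}$ as a base point. Since $\co$ is a single $W$-conjugacy class and $\tw' \in \co$, I can write $\tw'=x \i \tw x$ for some $x \in W$. Because both $\tw$ and $\tw'$ lie in $\co_{\min}$, they share the minimal length in $\co$, so $\ell(x \i \tw x)=\ell(\tw)$; this is exactly the defining condition $x \in W_{\tw}$.

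Next I would invoke Theorem~\ref{main2}, which asserts that the map $\t_{\tw}: \cp_{\tw} \to W_{\tw}$ is surjective. Applying surjectivity to the element $x \in W_{\tw}$ yields a path $\mathbf i=(i_1, \cdots, i_t) \in \cp_{\tw}$ with $\t_{\tw}(\mathbf i)=s_{i_1}\cdots s_{i_t}=x$. By the definition of $\cp_{\tw}$, this path is a chain $\tw \xrightarrow{i_1} s_{i_1}\tw s_{i_1} \xrightarrow{i_2} \cdots \xrightarrow{i_t} s_{i_t}\cdots s_{i_1}\tw s_{i_1}\cdots s_{i_t}$, whose endpoint is precisely $x \i \tw x=\tw'$.

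Finally, since $\tw$ is of minimal length in $\co$, every element appearing along this chain has length $\ell(\tw)$; in particular the chain exhibits $\tw \to \tw'$ with $\ell(\tw)=\ell(\tw')$. By the observation in $\S$\ref{equiv} that $\tw \approx \tw'$ holds whenever $\tw \to \tw'$ and $\ell(\tw)=\ell(\tw')$, I conclude $\tw \approx \tw'$, as desired.

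I do not expect a genuine obstacle here: the corollary is an immediate translation of the surjectivity statement of Theorem~\ref{main2}. The only points that merit a moment's verification are that the endpoint of the path realizing $\t_{\tw}(\mathbf i)=x$ is indeed $\tw'$ (a direct computation of $s_{i_t}\cdots s_{i_1}\tw s_{i_1}\cdots s_{i_t}=x \i \tw x$), and that a length-preserving chain of $\xrightarrow{i}$-moves yields the stronger relation $\approx$ rather than merely $\to$, which is guaranteed by the equal-length criterion of $\S$\ref{equiv}. One could alternatively route the argument through Lemma~\ref{conn} and the connectedness of $Z$ established inside the proof of Theorem~\ref{main2}, but the direct appeal to surjectivity of $\t_{\tw}$ is cleaner.
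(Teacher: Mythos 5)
Your proof is correct and follows exactly the route the paper intends: the paper states the corollary ``follows easily from Theorem \ref{main2}'' and your argument --- write $\tw'=x\i\tw x$, note $x\in W_{\tw}$, lift $x$ to a path via surjectivity of $\t_{\tw}$, and observe all elements along the path have minimal length so $\to$ upgrades to $\approx$ --- is precisely that easy deduction. No issues.
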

\begin{rmk}
This result was first proved by Geck and Pfeiffer in \cite[3.2.7]{GP00} for $W$ and then by Geck-Kim-Pfeiffer \cite{GKP} for twisted conjugacy classes in exceptional groups and by the first author \cite{He1} in the remaining cases. 
\end{rmk}

\begin{cor}
Let $\co$ be an elliptic conjugacy class of $\tW$ and $\tw \in \co_{\min}$. Then $\t_{\tw}: \cp_{\tw, \tw} \to Z_W(\tw)$ is surjective. 
\end{cor}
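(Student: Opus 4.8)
The plan is to deduce this corollary directly from Theorem \ref{main2} by recognizing $\cp_{\tw, \tw}$ as the full preimage of $Z_W(\tw)$ under $\t_{\tw}$. First I would record the elementary bookkeeping identity underlying the notion of a path. For $\mathbf i = (i_1, \dots, i_t) \in \cp_{\tw}$, write $x = \t_{\tw}(\mathbf i) = s_{i_1} \cdots s_{i_t}$; then $x\i = s_{i_t} \cdots s_{i_1}$, so the terminal element of the path is $s_{i_t} \cdots s_{i_1} \tw s_{i_1} \cdots s_{i_t} = x\i \tw x$. Consequently the path returns to $\tw$, i.e.\ $\mathbf i \in \cp_{\tw, \tw}$, precisely when $x\i \tw x = \tw$, that is, when $x \in Z_W(\tw)$. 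This gives the clean description $\cp_{\tw, \tw} = \t_{\tw}\i(Z_W(\tw))$, and in particular re-confirms that $\t_{\tw}$ carries $\cp_{\tw, \tw}$ into $Z_W(\tw)$, so that the statement is well posed.

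With this identification in hand, I would simply invoke Theorem \ref{main2}. Since $\tw$ lies in $\co_{\min}$ for the elliptic class $\co$, that theorem asserts that $\t_{\tw}: \cp_{\tw} \to W_{\tw}$ is surjective. Now take any $w \in Z_W(\tw)$. By the inclusion $Z_W(\tw) \subset W_{\tw}$ recorded in the definition of $W_{\tw}$, we may regard $w$ as an element of $W_{\tw}$, and surjectivity produces a path $\mathbf i \in \cp_{\tw}$ with $\t_{\tw}(\mathbf i) = w$. By the identity above the terminal element of $\mathbf i$ equals $w\i \tw w = \tw$, so $\mathbf i$ is in fact a closed path: $\mathbf i \in \cp_{\tw, \tw}$. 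Hence $w$ lies in the image of the restricted map $\t_{\tw}: \cp_{\tw, \tw} \to Z_W(\tw)$, which is the desired surjectivity.

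Structurally this is nothing more than the general observation that a surjection $f: X \to Y$ restricts to a surjection $f\i(Z) \to Z$ for any subset $Z \subseteq Y$, applied here to $f = \t_{\tw}$ and $Z = Z_W(\tw)$. I therefore expect no genuine obstacle at this stage: all the real content is already absorbed into Theorem \ref{main2}, whose proof establishes the connectedness of $Z = (\cup_{A \in C_{\tw}} \bar A) \cap V^{subreg}$. The only point deserving a moment's attention is the order convention in the bookkeeping identity, namely that it is $\t_{\tw}(\mathbf i)\i \tw\, \t_{\tw}(\mathbf i)$ (and not the oppositely conjugated element) that equals the terminal element of the path; this is exactly what makes the preimage come out as $Z_W(\tw)$ rather than some twisted variant.
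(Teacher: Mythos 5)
Your proof is correct and is exactly the deduction the paper intends: the paper offers no separate argument, merely noting that the corollary ``follows easily from Theorem \ref{main2}'', and your observation that $\cp_{\tw,\tw}=\t_{\tw}\i(Z_W(\tw))$ (since the terminal element of a path $\mathbf i$ is $\t_{\tw}(\mathbf i)\i\,\tw\,\t_{\tw}(\mathbf i)$) reduces it to restricting the surjection of Theorem \ref{main2} to a preimage. Nothing further is needed.
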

\begin{rmk}
This was first conjectured by Lusztig in \cite[1.2]{L2}. He also proved the case where $W$ is of classical type and $\d$ is trivial in \cite{L2}. The twisted conjugacy classes in a classical group were proved by him later (unpublished). The verification of exceptional groups was due to J. Michel. 
\end{rmk}

\section{Good elements}
\subsection{} Let $B^+$ be the braid monoid  associated with $(W,S)$. Then there is a canonical injection $j:W\longrightarrow B^+$ identifying the generators of $W$ with the generators of $B^+$ and $j(w_1w_2)=j(w_1)j(w_2)$ for $w_1,w_2\in W$ if $\ell(w_1w_2)=\ell(w_1)+\ell(w_2)$.

Now the automorphism $\d$ induces an automorphism of $B^+$, which is still denoted by $\d$. Set $\tB^+=\<\d\> \ltimes B^+$. Then $j$ extends in a canonical way to an injection $\tW \to \tB^+$, which we still denote by $j$. We will simply write $\ud{\tw}$ for $j(\tw)$.

Following \cite{G}, we call $\tw\in\tW$ a {\it good} element if there exists a strictly decreasing sequence $S_0\supset S_1\supset\cdots\supset S_l$ of subsets of $S$ and even positive integers $d_0,\cdots,d_l$ such that $$(\u{\tw})^d=\ud{w_0}^{d_0}\cdots\u{w_l}^{d_l}.$$ Here $d$ is the order of $\tw$ and $w_i$ is the maximal element of the parabolic subgroup of $W$ generated by $S_i$. \\
Moreover, if $d$ is even, we call $\tw\in\tW$ {\it very good} if $$(\u{\tw})^{\frac {d}{2}}=\g\ud{w_0}^{\frac {d_0}{2}}\cdots\u{w_l}^{\frac {d_l}{2}}$$ for some $\g\in\<\d\>$.

\subsection{} Let $\tw \in \tW$. Let $\underline \th=(\th_1, \th_2, \cdots, \th_r)$ be a sequence of elements in $\G_{\tw}$ with $\th_1<\th_2<\cdots<\th_r$. We set $F_i=\sum_{j=1}^i V^{\th_j}_{\tw}$ for $0 \le i \le r$. We say that $\underline \th$ is {\it admissible} if $F_r$ contains a regular point of $V$. Then we have a filtration $$0=F_0 \subset \cdots \subset F_r \subset V.$$ Set $W_i=W_{F_i}$. Then $W=W_0 \supset W_1 \supset \cdots \supset W_r=\{1\}$. There there exists  $0=i_0<i_1<i_2< \cdots<i_k \le r$ such that for $0 \le j<k$, $W_{i_j}=W_{i_j+1}=\cdots=W_{i_{j+1}-1} \neq W_{i_{j+1}}$. We then write $r(\underline \th)=(\th_{i_1}, \th_{i_2}, \cdots, \th_{i_k})$ and call it the {\it irredundant  sequence associated to $\underline \th$}. 

For $0 \leq i \leq r$, let $C_i$ be the connected component of $V-\cup_{H\in\fH_{F_i}}H$ containing $A$. We say that a Weyl chamber $A \subset V$ is {\it in good position} with respect to $(\tw, \underline \th)$ if for any $i$, $\bar C_i$ contains some regular point of $F_{i+1}$. It is easy to see that such $A$ always exists. Moreover, $A$ is in good position with respect to $(\tw, \underline \th)$ if and only if the fundamental chamber $C$ is in good position with respect to $(\tw_A, \underline \th)$. 

Let $\underline \th_0$ be the sequence consisting of all the elements in $\G_{\tw}$. We say that a Weyl chamber $A \subset V$ is {\it in good position} with respect to $\tw$ if it is in good position with respect to $(\tw, \underline \th_0)$. 

\begin{lem}\label{bb}
Let $\tw \in \tW$ and $0 \le \th \le \pi$. If $C$ and $\tw(C)$ are in the same connected component of $V-\cup_{H \in \fH_{V_{\tw}^\th}} H$ and $C$ contains a regular point of $V_{\tw}^\th$, then for any $d \in \NN$ with $d \th/2 \pi \in \NN$, we have that $$\underline \tw^d=\s (\underline{w_1 w_0} \, \underline{w_0 w_1})^{d \th/2\pi}.$$ Here $w_1$ is the maximal element in $W_{V_{\tw}^\th}$ and $\s \in \<\d\>$ with $\s(w_1)=w_1$. 

If moreover, $d$ is even and $d \th/2 \pi$ is an odd number, then $$\underline \tw^{d/2}=\s' \underline{w_0 w_1} (\underline{w_1 w_0} \, \underline{w_0 w_1})^{(\frac{d \th}{2 \pi}-1)/2}.$$ Here $\s' \in \<\d\>$ with $\s'(w_0 w_1)=w_1 w_0$. 
\end{lem}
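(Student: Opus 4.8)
The plan is to translate the statement about braid-monoid powers into the geometric language of Weyl chambers and the length formula from Proposition~\ref{f2}, and then verify the braid relation by tracking lengths additively. The key observation is that the hypotheses—that $C$ and $\tw(C)$ lie in the same component of $V-\cup_{H\in\fH_{V_{\tw}^\th}}H$ and that $C$ contains a regular point of $K:=V_{\tw}^\th$—are exactly the hypotheses of Lemma~\ref{f1} (with $A=C$). So I first record that $\ell(\tw)=\tfrac{\th}{\pi}\sharp(\fH-\fH_K)$, and more importantly that the full computation of Lemma~\ref{f1} describes $\fH(C,\tw(C))$ geometrically as the set of hyperplanes crossed along the short arc $Q_0$ from $\bar v$ to $\tw(\bar v)$.

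\medskip

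First I would set up the rotation picture. Let $v$ be the regular point of $K$ in $\bar C$, let $v_i=\tw^i(\bar v)$, and recall from the proof of Lemma~\ref{f1} that the $v_i$ all lie on a single circle $S^1$ in a $2$-dimensional $\tw$-stable subspace of $K$, with $\tw$ acting by rotation through $\th$. The element $w_1=w_{I(K,C)}$ is the longest element of the parabolic $W_{F}$ fixing $K$ pointwise, so $\underline{w_1}$ is a positive lift whose length is $\sharp\fH_K$. The heart of the matter is a single braid identity in $B^+$: I claim that for the chamber $C$ in good position, the product $\underline{\tw}^{\,d}$ factors, after reading off which hyperplanes are crossed in each step $\tw^i(C)\to\tw^{i+1}(C)$, as an alternating product of the form $\s\,(\underline{w_1 w_0}\,\underline{w_0 w_1})^{d\th/2\pi}$. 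The point is that over one full rotation by $2\pi$ the ray sweeps each wall of $\fH-\fH_K$ exactly $d\th/\pi$ times and each wall of $\fH_K$ a controlled number of times; grouping the crossings in consecutive pairs of steps yields the block $\underline{w_1 w_0}\,\underline{w_0 w_1}$, and the prefactor $\s\in\<\d\>$ absorbs the failure of $\tw^d$ to be the identity on the $\<\d\>$-component.

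\medskip

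The cleanest route to the braid identity itself is to avoid case-checking individual hyperplanes and instead argue by \emph{length additivity in} $B^+$: if a reduced word for $\tw^d$ can be subdivided into consecutive syllables whose lengths add up (so that $j$ respects the product), then the corresponding braid factorization holds by the defining property $j(w_1w_2)=j(w_1)j(w_2)$ when $\ell(w_1w_2)=\ell(w_1)+\ell(w_2)$. So I would compute $\ell(w_1w_0)+\ell(w_0w_1)=2(\ell(w_0)-\ell(w_1))=2\,\sharp(\fH-\fH_K)$ using $\ell(w_0)=\sharp\fH$ and $\ell(w_1)=\sharp\fH_K$, compare with $\ell(\tw^d)=d\ell(\tw)=d\cdot\tfrac{\th}{\pi}\sharp(\fH-\fH_K)$, and check that the exponent indeed matches $d\th/2\pi$. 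The factor $\s$ is fixed by demanding $\s(w_1)=w_1$, which is exactly the condition $\tw(I(K,C))=I(K,C)$ guaranteed by Proposition~\ref{f2}. For the second assertion with $d$ even and $d\th/2\pi$ odd, I would run the same argument through the half-power $\tw^{d/2}$, where the total rotation is an odd multiple of $\pi$; this introduces one unpaired block, giving the leading $\s'\underline{w_0w_1}$ with $\s'(w_0w_1)=w_1w_0$, followed by $(d\th/2\pi-1)/2$ complete blocks.

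\medskip

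\textbf{The main obstacle} I expect is not the length bookkeeping but verifying that the reduced expression for each conjugate $\tw^{i}(C)\to\tw^{i+1}(C)$ assembles into the \emph{positive} braid $\underline{w_1w_0}$ or $\underline{w_0 w_1}$ in the correct order, i.e.\ that each single rotation step genuinely realizes a reduced subword of the longest element of the relevant parabolic and that successive steps concatenate without cancellation in $B^+$. This is where one must use that $C$ is in good position and that $w_0w_1$ and $w_1w_0$ are the two ways of writing the "rotation by $2\pi/(d\th/\pi)$ worth" of wall-crossings; the geometric input is precisely the description of $Q_i'$ in Lemma~\ref{f1} together with the fact that $\fH_K$-walls alternate with $(\fH-\fH_K)$-walls along the rotation. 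I would make this precise by identifying, for each $i$, the set $\fH(\tw^i(C),\tw^{i+1}(C))$ with the walls crossed on the arc $Q_i$, and then matching the two-step union $\fH(\tw^{2j}(C),\tw^{2j+2}(C))$ with the inversion set of $w_1w_0\cdot w_0w_1$ read in $W$.
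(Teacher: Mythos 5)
Your setup (Lemma~\ref{f1}, the rotation picture, the total length count $d\,\ell(\tw)=\frac{d\th}{\pi}\sharp(\fH-\fH_K)$) matches the paper's, and you correctly name the main obstacle, but the mechanism you propose for overcoming it does not work and the paper's actual device is absent. Two concrete problems. First, ``grouping the crossings in consecutive pairs of steps'' cannot produce the block $\underline{w_1 w_0}\,\underline{w_0 w_1}$: that block has braid length $2\sharp(\fH-\fH_K)$ and corresponds to one \emph{full revolution} of the circle (each $H\in\fH-\fH_K$ meets $S^1$ in exactly two points), i.e.\ to $2\pi/\th$ steps of $\tw$, which is neither $2$ nor even an integer in general (for $\th=\frac{4\pi}{5}$ and $d=5$ one has $5$ steps but only $2$ blocks, so no grouping of consecutive steps can work). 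Second, the ``length additivity in $B^+$'' you invoke lets you split $j(w)$ for a \emph{single} element $w\in W$ along a length-additive factorization; but $\underline\tw^{\,d}=j(\tw)^d$ is not $j(\tw^d)$, and $\ell(\tw^k)<k\,\ell(\tw)$ in $W$ as soon as the accumulated rotation $k\th$ exceeds $\pi$, so there is no reduced word for any element of $W$ of length $d\,\ell(\tw)$ to subdivide, and consecutive factors $\underline\tw\cdot\underline\tw$ do not concatenate length-additively. This is precisely the difficulty, and the proposal restates it rather than resolving it.

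The paper's resolution, which you would need to supply, is an arithmetic reduction: write $\th=\frac{2p}{q}\pi$ with $p,q$ coprime, choose $s$ with $sp\equiv 1\pmod q$, and replace $\tw$ by $x=\tw^s$, which rotates $K=V_{\tw}^\th$ by the primitive angle $2\pi/q$; Lemma~\ref{f1} then gives $\ell(x^k)=\frac{2k}{q}\sharp(\fH-\fH_K)$ for all $2k\le q$, so length additivity \emph{does} hold for powers of $x$ up to half a revolution. When $q$ is even this yields $x^{q/2}=w_1w_0\t^{sq/2}$ directly; when $q$ is odd one writes $\t^{-sk}x^k w_1=y^{-1}w_0$ and $x^k=w_1w_0(y')^{-1}\t^{sk}$ with $k=\frac{q-1}{2}$ and length-additive factorizations, and the braid-monoid computation $\underline x^{\,q}=\underline x^{\,k}\,\underline x\,\underline x^{\,k}$ gives $\underline x^{\,q}=\t^{sq}\underline{w_1w_0}\,\underline{w_0w_1}$; finally $\underline\tw^{\,q}=(\t^{-tq}\underline x^{\,p})^q$ produces the stated formula. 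None of this is recoverable from pairing consecutive $\tw$-steps, so the proposal has a genuine gap at its central step.
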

\begin{proof}
We simply write $K$ for $V_{\tw}^\th$ and $J$ for $I(K, C)$. Assume that $\tw \in \t W$ for $\t \in \<\d\>$. Let $v \in C$ be a regular point of $K$. Assume $\th=\frac{2p}{q}\pi$ with integers $p, q$ coprime and $0 \le 2p\le q$. Choose $s,t\in\ZZ$ such that $sp-1=tq$. Then $\tw^{sp}=\tw^{tq} \tw$. Since $\tw^{q}(v)=v$ and $\tw^q$ fixes the connected component of $V-\cup_{H\in\fH_K}H$ containing $C$, we have that $\tw^q(C)=C$. Therefore $\tw^q=\t^q$. Moreover $\t^q(w_1)=w_1$. 

Set $x=\tw^s$. Then $x$ acts on $K$ by rotating $\frac{2 \pi}{q}$ and $K \subset V_x^{\frac{2 \pi}{q}}$. Also $x(K)=K$. Now by Lemma \ref{f1}, $\ell(x^k)=\frac{2 k}{q} \sharp(\fH-\fH_K)$ for any $k \in \NN$ with $2 k \le q$. 

If $2 \mid q$, then $2 \nmid p$ and $\ell(x^{q/2})=\sharp(\fH-\fH_K)$. Also $x \in {}^J \tW^J$ with $x(J)=J$. Thus $x^{q/2}=w_1 w_0 \t^{sq/2}=\t^{sq/2} w_0 w_1$. Hence $\t^{sq/2}(w_0 w_1)=w_1 w_0$. Notice that $\tw=x^p \t^{-tq}$ and $\t^{-tq}(x)=x$. Therefore \begin{align*} \underline \tw^{q/2} &=(\t^{-t q} \underline{x^p})^{q/2}=\t^{-t q^2/2} (\underline x^{q/2})^p=\t^{-t q^2/2} \t^{s p q/2} \underline{w_0 w_1} (\underline{w_1 w_0} \, \underline{w_0 w_1})^{(p-1)/2} \\ &=\t^{q/2} \underline{w_0 w_1} (\underline{w_1 w_0} \, \underline{w_0 w_1})^{(p-1)/2}.\end{align*} Since $\t^q(w_1)=w_1$ and $\t^{sq/2}(w_0 w_1)=w_1 w_0$, we have that $\t^{q/2}(w_0 w_1)=w_1 w_0$ and $\underline \tw^q=\t^q (\underline{w_1 w_0} \, \underline{w_0 w_1})^p$. 

If $2 \nmid q$, then we set $k=\frac{q-1}{2}$.  Then $x^k \in {}^J \tW^J$ and $\ell(x^k)=\frac{q-1}{q} \sharp(\fH-\fH_K)=\ell(w_0 w_1)-\frac{1}{q}\sharp(\fH-\fH_K)$. We have that $\t^{-sk} x^k \in W^J$ and $\t^{-sk} x^k w_1=y \i w_0$ for some $y \in W$ with $\ell(y \i w_0 w_1)=\ell(w_0 w_1)-\ell(y)$ and $\ell(y)=\frac{1}{q} \sharp(\fH-\fH_K)$. 

Similarly, $x^k=w_1 w_0 (y') \i \t^{s k}$ for some $y' \in W$ with $\ell(w_1 w_0 (y') \i)=\ell(w_1 w_0)-\ell(y')$ and $\ell(y')=\frac{1}{q} \sharp(\fH-\fH_K)$.

Since $x^q=(\tw)^{s q}=\t^{s q}$, we have that 
\begin{align*} x &=x^q x^{-k} x^{-k}=\t^{s q} (w_1 w_0 (y') \i \t^{s k}) \i (\t^{s k} y \i w_0 w_1) \i \\ &=\t^{s (q-k)} y' y \t^{-s k}.\end{align*} 

Since $\ell(x)=\frac{2}{q} \sharp(\fH-\fH_K)=\ell(y)+\ell(y')$, we have that $$\underline x=\t^{s (q-k)} \underline{y'} \, \underline{y} \t^{-s k}.$$ 

Moreover, \begin{align*} \t^{s q} &=x^k x x^k=x^k (\t^{s (q-k)} y' y \t^{-s k}) (\t^{s k} y \i w_0 w_1)=x^k \t^{s (q-k)} y' w_0 w_1 \\ &=x^k \t^{s q} (x^{-k}).\end{align*} Hence \begin{align*} \underline x^q &=\underline x^k \underline x \, \underline x^k=\underline x^k \t^{s q} \t^{-s k} \underline{y'} \, \underline y \t^{-s k} \underline x^k= \t^{s q} \underline x^k \t^{-s k} \underline{y'} \, \underline y \t^{-s k} \underline x^k \\ &= \t^{s q} \underline{w_1 w_0 (y') \i} \, \underline{y'} \, \underline y \t^{-s k} \t^{s k} \underline{y \i w_0 w_1}=\t^{s q} \underline{w_1 w_0} \,  \underline{w_0 w_1}. \end{align*}

Thus $$\underline \tw^q=(\t^{-t q} \underline x^p)^q=\t^{-t q^2} (\underline x^q)^p=\t^{-t q^2} \t^{s p q} (\underline{w_1 w_0} \, \underline{w_0 w_1})^p=\t^q (\underline{w_1 w_0} \, \underline{w_0 w_1})^p.$$
\end{proof}

\

Now we prove the existence of good and very good elements. 

\begin{thm}\label{cc}
Let $\tw \in \tW$ and $\underline \th$ be an admissible sequence with $r(\underline \th)=(\th_1, \cdots, \th_k)$. If the fundamental chamber $C$ is in good position with respect to $(\tw, \underline \th)$, then $$\underline \tw^d=\s \underline w_0^{d \th_1/\pi} \underline w_1^{d (\th_2-\th_1)/\pi} \cdots \underline w_{k-1}^{d(\th_k-\th_{k-1})/\pi},$$ here $d \in \NN$ with $d \th_j/2 \pi \in \ZZ$ for all $j$, $w_j$ is the maximal element in $W_{i_j}$ and $\s \in \<\d\>$. 

If moreover, $d$ is even, then $$\underline \tw^{d/2}=\s' \underline w_0^{d \th_1/2 \pi} \underline w_1^{d (\th_2-\th_1)/2 \pi} \cdots \underline w_{k-1}^{d(\th_k-\th_{k-1})/2 \pi}$$ for some $\s' \in \<\d\>$. 
\end{thm}
\begin{proof}
We argue by induction on $\sharp W$. Assume that the statement holds for any $(W', S', \d')$ with $\sharp W'<\sharp W$. 

We assume that $\th$ is irredundant by replacing $\th$ by $r(\th)$ if necessary. By assumption, $\bar C$ contains a regular point of $F_1$. Hence by Proposition \ref{f2}, $\tw=\tw' u$, where $u \in W_{F_1}$, $\tw' \in {}^{I(F_1, C)} \tW^{I(F_1, C)}$ with $\tw'(I(F_1, C))=I(F_1, C)$. 

Set $V_1=F_1^\bot$, $W_1=W_{F_1}$ and $\tW_1=\<\d_1\> \ltimes W_1$, where $\d_1$ is the automorphism on $W_1$ defined by the conjugation of $\tw'$. Then we may naturally regard $\tW_1$ as a reflection subgroup of $GL(V_1)$. Set $C'=C_1 \cap V_1$. Then $C' \subset V_1$ is the fundamental Weyl chamber of $W_1$. Since $C$ is in good position with respect to $\tw$, $C'$ is in good position with respect to $\d_1 u\in \tW_1$. 

By induction hypothesis on $\tW_1$, $$(\d_1 \underline u)^d=(\d_1)^d \underline w_1^{d (\th_2)/\pi} \cdots \underline w_{k-1}^{d(\th_k-\th_{k-1})/\pi}$$ in $\<\d_1\>\rtimes B_1^+$, here $B_1^+$ is the Braid monoid associated with $W_1$. By Lemma \ref{bb}, \begin{align*} \ud \tw^d &=(\ud \tw')^d \underline w_1^{d \th_2/\pi} \cdots \underline w_{k-1}^{d(\th_k-\th_{k-1})/\pi} \\ &=\s (\underline{w_1 w_0} \, \ud{w_0 w_1})^{d\th_1/\pi} \underline w_1^{d \th_2/\pi} \cdots \underline w_{k-1}^{d(\th_k-\th_{k-1})/\pi}.\end{align*}

Since $\underline w_0^2$ commutes with $\ud w_1$, we have that $(\underline{w_1 w_0} \, \ud{w_0 w_1}) \ud w_1^2=\ud w_0^2$. Hence $\ud \tw^d=\s \underline w_0^{d \th_1/\pi} \underline w_1^{d (\th_2-\th_1)/\pi} \cdots \underline w_{k-1}^{d(\th_k-\th_{k-1})/\pi}$.

The 	``moreover'' part can be proved in the same way. 
\end{proof}

\subsection{} It was proved in \cite{G}, \cite{GKP} and \cite{He1} that for any conjugacy class of $\tW$, there exists a good minimal length element. Below we give a case-free proof. We'll also see that it provides a practical way to construct good minimal length element. 

\begin{prop}
Let $\tw\in \tW$ and $A$ be a Weyl chamber. If $A$ is in good position with respect to $\tw$, then $\tw_A$ is a good element and is of minimal length in its conjugacy class. 
\end{prop}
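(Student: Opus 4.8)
The plan is to prove the two assertions---that $\tw_A$ is good and that it has minimal length---by largely independent arguments, both resting on Section~2 and on Theorem~\ref{cc}.

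Goodness is almost immediate from Theorem~\ref{cc}. Since $A$ is in good position with respect to $\tw$, it is in good position with respect to $(\tw,\underline\th_0)$, where $\underline\th_0$ lists all angles of $\G_{\tw}$; equivalently $C$ is in good position with respect to $(\tw_A,\underline\th_0)$. This sequence is admissible because $F_r=V$ trivially contains regular points of $V$. I would take $d$ to be the order of $\tw_A$, which equals the order of $\tw$: then every eigenvalue $e^{i\th_j}$ of $\tw_A$ is a $d$-th root of unity, so $d\th_j/2\pi\in\ZZ$ and the hypotheses of Theorem~\ref{cc} hold. The theorem gives
$$\u{\tw_A}^d=\s\,\u{w_0}^{d\th_1/\pi}\u{w_1}^{d(\th_2-\th_1)/\pi}\cdots\u{w_{k-1}}^{d(\th_k-\th_{k-1})/\pi}$$
with $r(\underline\th_0)=(\th_1,\dots,\th_k)$ and $w_j$ the longest element of $W_{i_j}$. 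Each exponent is twice an integer, hence even; those after the first are positive since the $\th_j$ strictly increase, and if $\th_1=0$ the leading factor $\u{w_0}^0$ is trivial and is discarded. Projecting $\tB^+\to\<\d\>$ and using $\tw_A^d=1$ forces $\s=1$. Finally, good position forces each $W_{i_j}$ to be a standard parabolic $W_{S_j}$ (this is what Proposition~\ref{f2} delivers at each level of the recursion inside the proof of Theorem~\ref{cc}), and irredundancy makes $S_0\supsetneq S_1\supsetneq\cdots$ strict. This is precisely the defining expression of a good element.

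For minimality I would argue by induction on $\sharp W$, following the pattern of \S\ref{p2}. We may assume $A=C$. Set $K=V_{\tw}$ with angle $\th_0$. The first good-position condition, combined with the comparison of regular points of $K$ and $K+V^W$ in \S\ref{aa}, shows that $\bar C$ contains a regular point of $K$. By Proposition~\ref{f2} we factor $\tw=\tw_{K,C}\,u$ with $u\in W_J$, $J=I(K,C)$, and we pass to the parabolic datum $\tW_1=\<\d_1\>\ltimes W_J$ acting on $V_1=F_1^\bot$, where $\d_1$ is conjugation by $\tw_{K,C}$. As in the proof of Theorem~\ref{cc}, $C\cap V_1$ is in good position for $\d_1 u$, and since $W_J$ is a proper parabolic of $W$ the induction hypothesis applies: $\d_1 u$ is of minimal length in its $\tW_1$-conjugacy class. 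By \S\ref{par} this is exactly the statement that $C$ lies in a connected component $U_0$ of $V-\cup_{H\in\fH_K}H$ whose $\ell(U_0)$ is minimal. Part~(2) of \S\ref{cop} then evaluates $\ell(\tw_A)=\ell(U_0)+\frac{\th_0}{\pi}\sharp(\fH-\fH_K)$, and part~(1) identifies this as the minimum over the whole conjugacy class; hence $\tw_A\in\co_{\min}$.

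The hard part is the bookkeeping relating the three subspaces $F_1$, $V_{\tw}$ and $V^W$, which is what makes ``minimal in $\tW_1$'' translate precisely into ``$U_0$ length-minimizing'' in the sense of \S\ref{cop}, together with the degenerate angles $\th_0\in\{0,\pi\}$, where $\tw$ acts as $\pm\Id$ on $V_{\tw}$ and the rotation picture of Lemma~\ref{f1} collapses. These are the configurations isolated as Case~3 in \S\ref{p2}, and I would dispose of them by the same direct count of separating hyperplanes used there.
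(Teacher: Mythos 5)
Your proof is correct and follows essentially the same route as the paper: goodness is read off from Theorem \ref{cc}, and minimality is obtained by induction on $\sharp W$ via the factorization of Proposition \ref{f2}, the partial-conjugation dictionary of \S\ref{par}, and the length formula of \S\ref{cop}. The only cosmetic differences are that the paper works with $F=V^W+V_{\tw}$ rather than $V_{\tw}$ (equivalent by \S\ref{aa}), and that no separate treatment of the degenerate angles $\th_0\in\{0,\pi\}$ is actually needed for the minimality statement, since Lemma \ref{f1} and \S\ref{cop} already cover those cases.
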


\begin{proof}
We argue by induction on $\sharp W$. The statement is obvious if $W$ is trivial. Now assume that the statement holds for any $(W', S', \d')$ with $\sharp W'<\sharp W$. 

The fundamental alcove $C$ is in good position with respect to $\tw_A$. Hence by Theorem \ref{cc}, $\tw_A$ is good. Set $F=V^W+V_{\tw}$. By definition, $\bar C$ contains some regular point of $V_{\tw}$. By $\S$\ref{aa}, $I(F, C)=I(V_{\tw}, C)$. By Proposition \ref{f2}, $\tw_A=\tw' u$, where $u \in W_F$, $\tw' \in {}^{I(F, C)} \tW^{I(F, C)}$ with $\tw'(I(F, C))=I(F, C)$. Set $V_1=F^\bot$, $W_1=W_F$ and $\tW_1=\<\d_1\> \ltimes W_1$, where $\d_1$ is the automorphism on $W_1$ defined by the conjugation of $\tw'$. The fundamental chamber $C_1 \cap V_1$ of $W_1$ is in good position with respect to $\d_1 u \in \tW_1$. 

By induction hypothesis on $\tW_1$, $\d_1 u$ is of minimal length in its conjugacy class in $\tW_1$. Hence $\tw_A=\tw' u$ is of minimal length in its conjugacy class in $\tW$. 
\end{proof}

\subsection{} Let $w_0$ be the maximal element in $W$. Then $\underline w_0^2$ is a central element in $\tB^+$. Now we discuss some good element $\tw$ such that $\ud \tw^d \in \ud w_0^2 B^+$, where $d$ is the order of $\tw$.

We've shown in the above proposition that for any elliptic conjugacy class in $\tW$, there exists a good minimal length element $\tw$ such that $\ud \tw^d \in \ud w_0^2 B^+$, where $d$ is the order of $\tw$. 

Another example is the conjugacy class of $d$-regular element. We call an element $\tw \in \tW$ {\it $d$-regular} if it has a regular $\xi$-eigenvector, here $\xi$ is a root of unity of order $d$. By \cite[4.10]{Sp} and \cite[Proposition 3.11]{BM}, if $\co$ is a conjugacy class of $\tW$ contains $d$-regular elements, then there exists $\tw \in \co$ such that $\underline \tw^d=\underline w_0^2$. 

\subsection{} We call a conjugacy class $\co$ of $\tW$ {\it quasi-elliptic} if for some (or equivalently, any) $\tw \in \co$, $(V^{\tw})^\bot$ contains a regular point of $V$. Here $V^{\tw}$ is the set of points fixed by $\tw$. Then an elliptic conjugacy class is quasi-elliptic. Also a conjugacy class of $d$-regular elements is also quasi-elliptic. 

Now we have that 

\begin{cor}
Let $\co$ be a quasi-elliptic conjugacy class of $\tW$. Then there exists $\tw \in \co$ such that $\tw$ is good and $\ud \tw^d \in \ud w_0^2 B^+$, here $d$ is the order of $\tw$. 
\end{cor}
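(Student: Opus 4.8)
The plan is to apply Theorem~\ref{cc} to the admissible sequence built from the \emph{positive} angles only: the quasi-elliptic hypothesis is exactly what makes this sequence admissible, and its smallest angle being strictly positive is what forces the leading factor $\ud{w_0}^2$ to appear.

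First I would fix $\tw\in\co$ of order $d$ and let $\underline\th=(\th_1,\cdots,\th_r)$ consist of all the elements of $\G_{\tw}$ that are $>0$. With $F_i=\sum_{j\le i}V_{\tw}^{\th_j}$ as in the excerpt we have $F_r=\bigoplus_{\th>0}V_{\tw}^\th=(V^{\tw})^\bot$, so by the definition of quasi-elliptic $F_r$ contains a regular point of $V$ and $\underline\th$ is admissible. This is the only point where the hypothesis enters, and it is precisely why quasi-ellipticity (rather than ellipticity) suffices: we are allowed to discard the $\th=0$ eigenspace from the filtration. I would then choose a Weyl chamber $A$ in good position with respect to $(\tw,\underline\th)$ and replace $\tw$ by $\tw_A\in\co$, so that the fundamental chamber $C$ is in good position with respect to $(\tw,\underline\th)$; passing to $\tw_A$ changes neither $\G_{\tw}$, nor $d$, nor admissibility.

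Next I would invoke Theorem~\ref{cc} with this $d$, the order of $\tw$. Since $\tw$ has order $d$ its eigenvalues are $d$-th roots of unity, so every element of $\G_{\tw}$ has the form $2\pi m/d$ with $m\in\ZZ$; thus the hypothesis $d\th_j/2\pi\in\ZZ$ holds and the resulting exponents are even. Writing $r(\underline\th)=(\th_{i_1},\cdots,\th_{i_k})$, Theorem~\ref{cc} yields
$$\ud{\tw}^d=\s\,\ud{w_0}^{d\th_{i_1}/\pi}\,\ud{w_1}^{d(\th_{i_2}-\th_{i_1})/\pi}\cdots\ud{w_{k-1}}^{d(\th_{i_k}-\th_{i_{k-1}})/\pi},$$
where $\s\in\<\d\>$, $w_0$ is the longest element of $W=W_{F_0}$, and the $w_j$ are longest elements of the strictly decreasing standard parabolics $W_{F_{i_j}}$. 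This identity already has the shape required in the definition of a good element, so it remains only to check that $\s=1$ and that the leading exponent is at least $2$.

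Finally I would extract the central factor. Applying the natural map $\tB^+\to\tW$ to the displayed identity and using $\tw^d=1$, together with $w_j^2=1$ and the evenness of the exponents, forces $\s=1$; hence $\tw$ is good. Because every positive angle is $\ge 2\pi/d$, the leading exponent is $d\th_{i_1}/\pi=2m_{i_1}\ge 2$, where $k\ge 1$ since $F_r\not\subseteq V^W$ (it contains a regular point). As $\ud{w_0}^2$ is central in $\tB^+$, I can split it off the front and conclude $\ud{\tw}^d\in\ud{w_0}^2B^+$. The only genuine subtlety is the observation that, whatever admissible sequence is used, the very first block of the good decomposition is the full longest element $w_0$ of $W$, and that it occurs with a \emph{positive} exponent precisely because we built the sequence from the positive angles; the rest is the bookkeeping already carried out in Theorem~\ref{cc}.
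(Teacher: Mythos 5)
Your proposal is correct and follows essentially the same route as the paper: take the admissible sequence of all positive angles in $\G_{\tw}$ (admissible precisely by quasi-ellipticity), move to a chamber in good position, apply Theorem~\ref{cc}, and note that the leading exponent $d\th_1/\pi$ is a positive even integer, hence at least $2$, so $\ud{w_0}^2$ splits off. Your extra check that $\s=1$ by projecting to $\tW$ is a small useful addition the paper leaves implicit, but it does not change the argument.
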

\begin{proof}
Let $\tw \in \co$ and $\ud \th$ be the sequence consisting of all nonzero elements in $\G_{\tw}$. Since $\co$ is quasi-elliptic, $\ud \th$ is admissible. Let $A$ be a Weyl chamber in good position with respect to $(\tw, \ud \th)$. Then $C$ is in good position with respect to $(\tw_A, \ud \th)$. By Theorem \ref{cc}, $\tw_A$ is good and $\ud \tw_A^d \in \ud w_0^{d \th_1/\pi} B^+$. Here $\th_1$ is the minimal element in $\ud \th$. Since $d \th_1/2 \pi \in \ZZ$ and $\th_1>0$, we have that $d \th_1/\pi \ge 2$. Hence $\ud \tw_A^d \in \ud w_0^2 B^+$. 
\end{proof}

\section*{Acknowledgement} We are grateful to G. Lusztig for helpful discussion on the centralizer of minimal length element and useful comments on this paper. We thank M. Rapoport for useful comments on Deligne-Lusztig varieties and representation of finite groups of Lie types and thank O. Dudas for helpful discussion on $d$-regular elements.

\end{document}